\documentclass[a4paper,11pt]{amsart}
\usepackage{amssymb}
\usepackage{amsmath}
\usepackage{amsthm}
\usepackage{amscd}
\usepackage[colorlinks=true,linkcolor=blue,urlcolor=black]{hyperref}\usepackage{float}
\usepackage[utf8]{inputenc}

\usepackage{stmaryrd}

\usepackage{amsfonts}
\usepackage{latexsym}
\usepackage[all]{xy}
\usepackage{graphicx}
\usepackage{pb-diagram}
\usepackage{verbatim}
\usepackage{anysize}
\usepackage{cancel}
\usepackage{color}
\usepackage{xcolor}
\usepackage[normalem]{ulem}

\usepackage{tikz,tikz-cd}
\usetikzlibrary{decorations.pathmorphing,calc,shadows.blur,shadings}

\usepackage[cal=boondox,calscaled=.96]{mathalfa}

\usepackage{fancyhdr}

\marginsize{2cm}{2cm}{2.5cm}{2.5cm}

\def\QQ{\mathbb{Q}}
\def\NN{\mathbb{N}}

\def\FF{\mathbb{F}}

\def\lbr{\llbracket}
\def\rbr{\rrbracket}

\newcommand{\N}{\mathbb N}

\newtheorem{teo}{Theorem}[section]
\newtheorem{prop}[teo]{Proposition}
\newtheorem{cor}[teo]{Corollary}
\newtheorem{lem}[teo]{Lemma}

\newtheorem{obs}[teo]{Remark}
\newtheorem{ej}[teo]{Example}

\newtheorem{Def-Lem}[teo]{Definition-Lemma}

\theoremstyle{definition}
	\newtheorem{Def}[teo]{Definition}

\DeclareMathOperator{\Hom}{\rm Hom}

\DeclareMathOperator{\Leap}{\rm Leaps}

\DeclareMathOperator{\rank}{\rm rank}

\DeclareMathOperator{\End}{\rm End}

\DeclareMathOperator{\IHS}{\rm IHS}

\DeclareMathOperator{\IDer}{\rm IDer}

\DeclareMathOperator{\Der}{\rm Der}

\DeclareMathOperator{\HS}{\rm HS}

\DeclareMathOperator{\Id}{\rm Id}

\DeclareMathOperator{\Min}{\rm Min}

\DeclareMathOperator{\het}{\rm ht}

	\title{Integrability in the sense of Hasse-Schmidt and $p^e$-basis}
	\author{
		 Mar\'ia de la Paz Tirado Hern\'andez}

	\thanks{
	The author was partially supported by Spanish Ministry of Science and Innovation, [PID2024-156912NB-I00].
	}

		\keywords{Hasse-Schmidt derivation, Integrable derivation, Leap, $p$-basis}
		\subjclass[2020]{13N15}

	\AtEndDocument{\bigskip{\footnotesize
			
			\noindent\textsc{Depto. Matem\'aticas, Facultad de Ciencias, Universidad Aut\'onoma de Madrid and Instituto de Ciencias Matem\'aticas, ICMAT,  CSIC-UAM-UC3M-UCM, Cantoblanco 28049 Madrid, Spain.} \newline
			\textit{E-mail address}: \texttt{maria.tirado@uam.es}	}}
	
\begin{document}
	
	\maketitle 
		\begin{abstract} Let $R$ be a ring of positive characteristic with a $p$-basis over a subring $k$.  We explore necessary and sufficient conditions under which the module of $k$-derivations of $R$ coincides with the module of integrable $k$-derivations of $R$.
	\end{abstract}
	\section{Introduction}
		
	Let $k$ be a ring and let $R$ be a commutative $k$-algebra   with unit. The $R$-module of derivations $
	\Der_k(R)$ codifies information about the infinitesimal structure of the ring $R$ relative to $k$. For instance, when $k$ is a perfect field and $R$ is essentially of finite type over $k$, then $R$ is regular if and only if $
	\Der_k(R)$ is a locally free $R$-module of rank equal to the Krull dimension of $R$.

	\medskip
	
	Quite generally,    if $k$ is a field containing ${\mathbb Q}$, we have that   $\rank 
	\Der_k(R)\leq \dim R$  (see \cite{Matsumura}, \cite{Molinelli}). This last inequality might  not hold   when $k$ is a field of positive characteristic. In fact,  this  is just an example of how wildly $\Der_k(R)$ can behave when working with rings that  contain a  field  of positive characteristic. In this last case, the $R$-submodule of the {\em integrable derivations}, $\IDer_k(R) \subseteq \Der_k(R)$  is an object    with better behavior. Recall that  $\IDer_k(R) \subseteq \Der_k(R)$   consists on the derivations  that can be extended to a  Hasse-Schmidt derivation  of $R$ over $k$ (of lenght $\infty$). We refer the reader to   \cite{BK}, \cite{mat-intder-I}, \cite{Mo} or  \cite{Se} for examples in which results for $\Der_k(R)$ when ${\mathbb Q}\subset k$ that are no longer valid if $k$ has prime characteristic,  are extended to  positive characteristic for $\IDer_k(R)$ instead. 
	
	\medskip
	
	It is quite natural to wonder how far from one another are the two modules of derivations, $\IDer_k(R)$ and $\Der_k(R)$. For instance, in \cite{BTH} the following result is proven: 
	\begin{teo}
		Let $k$ be a regular ring,  set $A=k[x_1,\ldots, x_n]$ and let  $I\subset A$ be a radical ideal with  $r=\max \{\het(P)\ | \ P\in \Min(I)\}$.  Let  $R=A/I$ and  let  $J_r$ be the $(n-r)$--Fitting ideal  of $\Omega_{R/k}$.  Then,   $J_r\Der_k(R)\subset \IDer_k(R)$.
	\end{teo}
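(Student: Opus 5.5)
The plan is to build, for each $\delta\in\Der_k(R)$ and each $r\times r$ minor $\Delta$ of the Jacobian matrix of a system of generators $f_1,\ldots,f_m$ of $I$, an explicit Hasse--Schmidt derivation of $R$ over $k$ of length $\infty$ whose first component is $\Delta\,\delta$. The ideal $J_r$ is generated by the images of these minors, and $\IDer_k(R)$ is an $R$-submodule. It therefore suffices to treat one minor at a time, and we may assume it uses $f_1,\ldots,f_r$ and $x_1,\ldots,x_r$.

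First lift $\delta$ to $A$. Since $A$ is a polynomial ring, $\delta$ comes from a $k$-derivation $D=\sum_i a_i\partial_{x_i}$ of $A$ with $D(I)\subseteq I$. Next, use Cramer's rule with the minor $\Delta=\det(\partial f_j/\partial x_i)_{i,j\le r}$ to replace $\Delta D$ by a derivation that kills $f_1,\ldots,f_r$ exactly, not just modulo $I$. Concretely, add a combination $\sum_{i\le r} b_i\partial_{x_i}$ with $b_i\in I$ so that the new derivation $E$ satisfies $E(f_j)=0$ for $j\le r$ and still induces $\Delta\delta$ on $R$.

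The core step is to integrate $E$ on $A$ to a Hasse--Schmidt derivation $E_\bullet$ with $E_0=\Id$ and $E_1=E$ that preserves $I$. Any $k$-derivation of a polynomial ring integrates: set $\varphi(x_i)=x_i+\sum_{n\ge1}c_{i,n}t^n$, which defines a $k$-algebra map $A\to A[[t]]$. The task is to choose the $c_{i,n}$, starting from $c_{i,1}=E(x_i)$, so that $\varphi(f_j)\equiv f_j \pmod{IA[[t]]}$ for every $j$. For $j\le r$, solve $\varphi(f_j)=f_j$ degree by degree. At degree $n$ the unknown $(c_{1,n},\ldots,c_{r,n})$ enters linearly through the Jacobian block, so the equation can be solved with coefficients in $A$ after multiplying by $\Delta$. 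This is why we rescale by $\Delta$ at each step, for example by considering $\varphi(x_i)=x_i+\sum c_{i,n}(\Delta t)^n$-type substitutions, i.e. by integrating $\Delta D$ rather than $D$. A Hensel/Newton-type iteration then solves $F(x+c(t))=F(x)$ with $c\equiv \Delta\cdot(\text{stuff})$.

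The remaining equations $f_j$ with $j>r$ are handled by the height hypothesis. Localize at each minimal prime $P$ of $I$ not containing $\Delta$; there $A_P$ is regular of dimension $\het P\le r$, and the first $r$ equations cut out $I_P$ generically because the Jacobian criterion applies over a regular base $k$. Hence $\Delta^N f_j\in(f_1,\ldots,f_r)+I'$, where $I'$ is the intersection of the components containing $\Delta$. Since $I$ is radical, this is enough to deduce $\varphi(f_j)-f_j\in IA[[t]]$ after possibly replacing $\Delta$ by a power and using that $\Delta$ kills $\varphi(f_j)-f_j$ on the other components. Powers suffice because $\IDer$ is a module and $\sqrt{J_r}$-type arguments are avoided by rescaling $t\mapsto\Delta^N t$, which multiplies $E_1$ by $\Delta^N$. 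To recover exactly $J_r$ rather than a power, we argue locally: $\IDer$ localizes well for finitely presented algebras, and at primes not containing $J_r$ everything is integrable. The hard part will be making this descent from a power of $J_r$ to $J_r$ itself. I expect this to be the main obstacle and would attack it by the degree-by-degree leap analysis, using the Cramer solution to show the obstruction at each degree already lies in $\Delta\cdot I$.
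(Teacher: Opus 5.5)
The paper only quotes this theorem from \cite{BTH} and gives no proof, so I am judging your proposal on its own. As it stands it does not reach the stated conclusion.

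\textbf{Main gap: the factor $\Delta$ is spent twice.} Your Cramer step already uses one factor of $\Delta$ to produce $E=\Delta D+\sum_{i\le r}b_i\partial_{x_i}$ with $E(f_j)=0$ for $j\le r$. The correction terms $b_i$ lie in $I$, but in general they are not in $\Delta A$. Let $M=(\partial f_j/\partial x_i)_{i,j\le r}$. If you start the recursion at $c_1=E(x)$, the degree-$2$ right-hand side already contains $\sum_{i,i'\le r}\partial^{(e_i+e_{i'})}f_j\,b_ib_{i'}\in I^2$, where $\partial^{(\alpha)}$ are divided-power partials. Solving through $M$ then requires dividing $\operatorname{adj}(M^{T})$ of this term by $\Delta$, and nothing guarantees that is possible.

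What your recursion actually proves is the following. Let $\eta$ be a $k$-derivation of $A$ with $\eta(f_j)=0$ for $j\le r$ and $\eta(x_i)\in\Delta A$ for all $i$. Every term of $P_n$ is a product of at least two earlier coefficients, all in $\Delta A$, so $P_n=\Delta^2Q_n$. Setting $c_{i,n}=0$ for $i>r$ and $c_{\le r,n}=-\Delta\operatorname{adj}(M^{T})Q_n$ gives an $\infty$-integral of $\eta$ that fixes $f_1,\dots,f_r$ and has all coefficients in $\Delta A$. Your rescaling $t\mapsto\Delta t$ is exactly the case $\eta=\Delta E$. So the output is $\Delta^2\Der_k(R)\subseteq\IDer_k(R)$ for each $r\times r$ minor $\Delta$, which is strictly weaker than the theorem.

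You flag the passage from this to $J_r$ as the main obstacle, but you do not supply the missing idea. Localization cannot supply it:
\begin{itemize}
\item at primes not containing $J_r$, both $J_r\Der_k(R)$ and $J_r^N\Der_k(R)$ localize to all of $\Der_k(R_{\mathfrak p})$;
\item at primes containing $J_r$, localizing just reproduces the same question for $R_{\mathfrak p}$.
\end{itemize}
The remark that ``the obstruction at each degree already lies in $\Delta\cdot I$'' is precisely what would need proof.

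\textbf{Second gap: the treatment of $f_j$ for $j>r$.} Since $\Delta\in I'$ by definition of $I'$, the containment $\Delta^N f_j\in(f_1,\dots,f_r)+I'$ holds trivially with $N=1$ and carries no information. ``Since $I$ is radical, this is enough'' is not an argument. A correct mechanism, with $\mathfrak a=(f_1,\dots,f_r)$, is:
\begin{itemize}
\item[(a)] For a minimal prime $P$ of $I$ with $\Delta\notin P$, the differentials of $f_1,\dots,f_r$ are independent in $\Omega_{A/k}\otimes\kappa(P)$. Hence their classes are independent in $PA_P/P^2A_P$. Since $A_P$ is regular of dimension $\het(P)\le r$, this gives $\mathfrak aA_P=PA_P$, so $P$ is a minimal prime of $\mathfrak a$.
\item[(b)] An HS-derivation of length $\infty$ of the Noetherian ring $A$ that preserves $\mathfrak a$ preserves $\sqrt{\mathfrak a}$, because $(A/\sqrt{\mathfrak a})[[t]]$ is reduced. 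It then preserves each minimal prime $Q$ of $\mathfrak a$: multiply by some $u\in\bigcap_{Q'\neq Q}Q'\setminus Q$ and use that $(A/Q)[[t]]$ is a domain. Infinite length is essential here in characteristic $p$.
\item[(c)] The minimal primes of $I$ containing $\Delta$ are preserved because all coefficients $c_{i,n}$ lie in $\Delta A$.
\end{itemize}
Step (c) is exactly what fails for a recursion started at $E(x)$ rather than $\Delta E(x)$, so the two gaps are linked.
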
 
	In particular, the theorem  gives a procedure to finding integrable derivations once a derivation of $R$ over $k$ is known. However,  there are situations in which the theorem does not tell us anything: 
	
	\begin{ej}\cite[Counterexample 3.9]{Ti2}\label{ejemplo_1} Let $s, t$ be  variables, let $k={\mathbb F}_2(s,t)$ and let $R=k[x,y]/\langle x^2+y^2+tx^4+sy^4\rangle$. Observe that the $R$-module $\Der_k(R)=\langle \partial_x, \partial_y\rangle$ has rank 2 while the Krull dimension of $R$ is 1. Here the corresponding Fitting ideal of $\Omega_{R/k}$ is $J_1=(0)$. On the other hand, it can be checked that $\IDer_k(R)=0$. 
	\end{ej}
	
	The results in this paper are in part motivated by trying to understand examples as Example \ref{ejemplo_1}.  In particular,   for a given ring, when can we expect to have the equality $\IDer_k(R)=\Der_k(R)$?  This question is connected to the following  formulated in  \cite{FernandezNarvaez}, where the authors mention    that they do not know any example of a regular local ring containing a quasi-coefficient field $k$ in which 
	$\Der_{k}(R)\neq \IDer_{k}(R)$. 
	
	\medskip

	We will be addressing our questions under the assumption of the existence of a $p$-basis. More precisely, we generalize \cite[Theorem 2]{Fur80}, which states that if $R$ is a reduced ring of characteristic $p$ with a $p$-basis over $R^p$, then $\Der(R)=\IDer(R)$. Namely, using the notion of $p^e$-basis, which will be dicussed in section \ref{Sec-MainTheorem}, we prove the following theorem:
	
	\medskip 
	\noindent{\bf Theorem \ref{teo-Equivalencias}} {\em 
	Let $R$ be a ring of prime characteristic $p>0$ and $e\geq 1$ be a positive integer. Assume that $R$ has a $p$-basis over a subring $k$. Then the following properties are equivalent:

	1.  $R$ has a $p^e$-basis over $k$.	
	
	2.	$R$ has a $p^s$-basis over $k$ for all $s \leq e$.
		
	3. $\Der_k(R) = \IDer_k(R; p^{e}-1)$. 
		 }

	\medskip
	
	Continuing with Example \ref{ejemplo_1}, the set $\{x,y\}$ is a $2$-basis of $R$ over $k$, but it is not a $4$-basis. This is due, in part, to the fact that $R^2$ and $k$ are not linearly disjoint over $k^2$, a property that, under some conditions, is necessary for $\infty$-integrability. In this paper, we will prove the following two results:

	\medskip 
\noindent{\bf Theorem \ref{main_1}} {\em Let $R$ be a reduced ring of prime characteristic $p>0$ that has a $p$-basis over a subfield $k$.   Then, if $R^p$ and $k$ are linearly disjoint over $k^{p}$, $\Der_{k}(R)=\IDer_{k}(R)$.
}

	\medskip

\noindent{\bf Theorem \ref{reciproco}}
{\em Let $R$ be an integral domain and let  $k\subset R$ be a field. Suppose that $R^p$ and $k$ are not linearly disjoint over $k^p$, that $\Omega_{R/k}$ is of finite presentation over $R$, and that $Q(R)$, the fraction field of $R$, is a finitely generated extension field of ${k}$.    Then $\IDer_{k}(R) \subsetneq \Der_k(R)$.}

\medskip
	
	It can be shown that if $R$ is a regular local ring containing a quasi-coefficent field $k$, then $R^p$ and $k$ are linearly disjoint over $k$. Hence, as a corollary to Theorem \ref{main_1}, we answer the question in \cite{FernandezNarvaez} under the assumption of the existence of a $p$-basis: 
	
	\medskip
	
	\noindent{\bf Corollary \ref{corolario_japones}}
	{\em Let $(R,{\mathfrak m})$ be a regular local ring containing a quasi-coefficient field $k$. Suppose that $R$ has a $p$-basis over $k$. Then $\Der_{k}(R)=\IDer_{k}(R)$. }
	
	\medskip
	
	Corollary \ref{corolario_japones} made us wonder what kinds of regular local rings that contain a quasi-coefficient field $k$ do not have a $p$-basis over $k$. During our search, we found the following result of Niitsuma, that is actually stronger than Corollary  \ref{corolario_japones}, and answers the question in \cite{FernandezNarvaez} using an approach that differs from ours (see also \cite[Theorem 2.1]{Tanimoto}):

	\begin{prop}\cite[Proposition 5.1]{NiitsumaTRU} Let $R$ be a Noetherian local ring of characteristic $p$ and let $k$ be a quasi-coeffcient field of $R$.  Then the following are equivalent: 
		
		1. $R$ is smooth over $k$; 
		
		2. $R$ is a regular local ring and $R$ has a $p$-basis over $k$.
		
		3. $R$ is a regular local ring and $R$ is a  finitely generated $R^p[k]$-module.	
	\end{prop}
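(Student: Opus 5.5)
We prove the three equivalences by the cycle $1\Rightarrow 2\Rightarrow 3\Rightarrow 1$. The intermediate step is condition 3': every $k$-derivation of $R$ extends to a Hasse-Schmidt derivation. Throughout, $R$ is a Noetherian local ring of characteristic $p$ and $k$ is a quasi-coefficient field. This means $R$ is formally \'etale over $k$ modulo $\mathfrak m$, or equivalently that $R/\mathfrak m$ is $0$-\'etale (separable algebraic) over $k$. We use the standard fact that if $k$ is a quasi-coefficient field then $\Omega_{R/k}$ is controlled by the $p$-structure, and in particular $\hat\Omega_{R/k}$ is the module of $R$-linear functionals relevant to $p$-independence over $R^p[k]$.

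\emph{$(1)\Rightarrow(2)$.} Assume $R$ is smooth ($0$-smooth for the $\mathfrak m$-adic topology) over $k$. By Grothendieck's criterion (formal smoothness over a field), $R$ is regular. Smoothness also gives that $R$ is formally smooth over $R^p[k]$. Applying the Frobenius base change, we get that $R^p\otimes_{k^p}k\to R$ is injective; this is linear disjointness, which holds by flatness of $R$ over $k$. Choose a regular system of parameters $x_1,\dots,x_n$ together with a lift of a $p$-basis of the residue field over $k(R/\mathfrak m)^p$. The residue-field part is trivial here because $k$ is a quasi-coefficient field, so the residue field is separable over $k$ and a $p$-basis of the residue field over it is empty. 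We then claim that $\{x_i\}$ is a $p$-basis of $R$ over $k$. Spanning follows from Nakayama applied to the finite module $R$ over $R^p[k]$. Finiteness is itself part of the argument: we check it after completion, where $\hat R\cong K[[x]]$ with $K\supseteq k$ separable, and then descend. Independence follows from $\dim_{R/\mathfrak m}\Omega\otimes R/\mathfrak m = n$ via smoothness.

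\emph{$(2)\Rightarrow(3)$.} This is easy. A $p$-basis makes $R$ free over $R^p[k]$ with monomial basis in $x_1,\dots,x_n$, so $R$ is module-finite over $R^p[k]$.

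\emph{$(3)\Rightarrow(1)$.} This is the main obstacle. Given that $R$ is regular and finite over $R^p[k]$, we must produce smoothness. Our plan is to use Kunz's theorem: $R$ regular implies Frobenius is flat, so $R$ is flat over $R^p$. Combined with finiteness, $R$ is a finite flat module over $R^p$. We then show that $R^p[k]\cong R^p\otimes_{k^p}k$ (linear disjointness) using regularity, by comparing with the completion $K[[x]]$, where it is checked directly. From this, $R$ is flat over $R^p\otimes_{k^p}k$, hence free of rank $p^n$, and a $p$-basis exists via minimal generators lifted from $\Omega_{R/k}\otimes R/\mathfrak m$. Finally, by Theorem~\ref{main_1} or directly by Tanimoto's criterion, a $p$-basis plus linear disjointness gives $0$-smoothness over $k$, since $\Omega_{R/k}$ is then free with the right rank and the Jacobian/Zariski criterion applies. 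We expect the delicate points to be the descent of linear disjointness and freeness from $\hat R$ to $R$, and handling $k$ imperfect with $[k:k^p]=\infty$. For the latter we would work with finite subextensions $k^p(\lambda_1,\dots,\lambda_m)$ and take a direct limit.
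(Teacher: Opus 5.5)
The paper does not prove this proposition; it quotes it from Niitsuma, with a pointer to Tanimoto. So your sketch has to stand on its own, and its $(1)\Rightarrow(2)$ step has a genuine gap. You take smoothness in the $\mathfrak m$-adic sense and try to get finiteness of $R$ over $R^p[k]$ by checking it on $\hat R\cong K[[x]]$ and descending. Every ingredient you actually use there is also available for the ring $V_q$ of Example~\ref{ejemplo_2}: regularity, the Cohen structure of $\hat R$, separability of $K/k$, and $\dim_K\Omega_{R/k}\otimes_RK=n$.

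\begin{itemize}
\item $V_q$ is a DVR with coefficient field $k=\overline{\mathbb F}_p$. Since $k$ is perfect, $V_q$ is geometrically regular, i.e.\ $\mathfrak m$-smooth, over $k$.
\item Its completion $k[[t]]$ is finite over $k[[t^p]]=\hat V_q^{\,p}[k]$.
\item Yet $V_q$ has no $p$-basis over $k$, and it is not finite over $V_q^p[k]=V_q^p$ (otherwise it would be $F$-finite, hence excellent by Kunz).
\end{itemize}

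So under your reading the implication is false. More to the point, finiteness over $R^p[k]$ does not descend from $\hat R$. Only injectivity statements descend: $p$-independence does, and so does linear disjointness, because $R^p\otimes_{k^p}k\to\hat R^{\,p}\otimes_{k^p}k$ is injective.

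When $[k:k^p]=\infty$ your check fails even upstairs. Take $R=K[[x]]$ and $k=K$. Every element of $K^p[[x^p]][K]$ has its coefficients in a finite-dimensional $K^p$-subspace of $K$, so $K[[x^p]]\not\subseteq R^p[K]$. Nakayama, with fibre $K[x]/(x^p)$, then shows $R$ is not finite over $R^p[K]$. This is not a technicality that direct limits over finite subextensions can remove.

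The hypothesis must be $0$-smoothness for the discrete topology; this is how the paper combines the proposition with Theorem~\ref{Teo-Smooth}. It is that hypothesis which has to produce finiteness:
\begin{itemize}
\item $0$-smoothness makes $\Omega_{R/k}$ projective, hence free over the local ring $R$.
\item Since $K$ is $0$-\'etale over $k$, the second fundamental sequence gives $\Omega_{R/k}\otimes_RK\cong\mathfrak m/\mathfrak m^2$.
\item Hence $\Omega_{R/k}=\Omega_{R/R^p[k]}$ is free on $dx_1,\dots,dx_n$ for a regular system of parameters $x_1,\dots,x_n$.
\item One still needs a genuinely Noetherian argument turning this differential basis into a $p$-basis. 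Tyc's theorem, used in Example~\ref{ejemplo_2}, is of this kind in the absolute case.
\end{itemize}
None of this appears in your proposal.

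Several other steps are wrong as written.

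\begin{itemize}
\item Linear disjointness does not follow from flatness of $R$ over $k$, since every module over a field is flat. It comes from geometric reducedness, or from $\hat R\cong K'[[x]]$ with $K'\supseteq k$ separable together with the descent above.
\item $R$ is not formally smooth over $R^p[k]$. By base change, its closed fibre $R/\mathfrak m^{[p]}$ would be a formally smooth Artinian algebra over a field, hence a field.
\item The residue-field contribution vanishes because $K=K^p[k]$, not merely because $K/k$ is separable. Also, $0$-\'etale does not mean algebraic.
\item In $(3)\Rightarrow(1)$, flatness of $R$ and of $R^p[k]$ over $R^p$ does not give flatness of $R$ over $R^p[k]$. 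Instead, apply Nakayama over the local ring $R^p[k]$, whose maximal ideal generates $\mathfrak m^{[p]}$: the monomials $x^\alpha$ with $0\le\alpha_i<p$ span $R$. Their independence over $R^p[k]\cong R^p\otimes_{k^p}k$ can be checked in $\hat R$.
\item Theorem~\ref{main_1} yields $\Der_k(R)=\IDer_k(R)$, not $0$-smoothness. No Jacobian or Zariski criterion applies to a ring that is not of finite type.
\end{itemize}

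What does work for the last step is direct lifting. Given $u\colon R\to C/N$ with $N^2=0$, send $r^p\lambda\mapsto\tilde c^{\,p}\lambda$, where $\tilde c$ is any lift of $u(r)$. This is well defined because $(c+n)^p=c^p$ and $R$ is reduced, and it is a ring map on $R^p[k]$ by linear disjointness. Then extend freely over the $p$-basis monomials.

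Finally, your announced condition 3' is never used.
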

	
	Example \ref{ejemplo_2} in section \ref{Sec-IntDis}  illustrates the case of  a local regular ring that contains a quasi-coefficient field $k$ that does not possess a $p$-basis over $k$.
	
	\medskip

	\medskip

The paper is organized as follows. Section \ref{seccion_preliminares} is devoted to recalling the main definitions regarding Hasse-Schmidt derivations and review some other basic results and definitions. The proofs of Theorem \ref{teo-Equivalencias} and its consequences will be addressed in section \ref{Sec-MainTheorem}. In section \ref{Sec-IntDis}, we will prove Theorems \ref{main_1} and \ref{reciproco}.

{\em Acknowledgments:} The author would like to thank Professor A. Bravo for proposing this problem and for her valuable assistance throughout the course of this work.
	
	\section{Preliminaries}\label{seccion_preliminares} 
	
	Let $A$ be a commutative ring with unit and let $k\subset A$ be a subring.  Let 
	$\overline{\mathbb N}:=\mathbb N \cup \{\infty\}$ and, for each
	integer $m\geq 1$,  we will use the notation  $A\lbr t\rbr_m:=A\lbr t\rbr/\langle
	t^{m+1}\rangle$ and $A\lbr t\rbr_\infty:=A\lbr t\rbr$. General
	references for the definitions and results in this section are
	\cite[\S 27]{Ma} and \cite{Na2}. The notion of HS-derivations (of length $\infty$) was introduced in \cite{H-S}:

	\begin{Def}\label{DefHS}
		A {\em Hasse-Schmidt derivation or  a   HS-derivation\footnote{The HS-derivations are also called {\em higher derivations}, see \cite[\S 27]{Ma}.} of $A$ over
			$k$  of length $m\geq 1$} (resp. of length $\infty$) is a sequence \color{black}
		$D:=(D_0,D_1,\ldots, D_m)$ (resp. $D=(D_0,D_1,\ldots)$) of
		$k$-linear maps $D_\alpha:A\rightarrow A$, satisfying the conditions:
		$$
		\begin{array}{ccc}
			D_0=\Id_A,&\displaystyle D_\alpha(xy)=\sum_{i+j=\alpha} D_i(x)D_j(y),
		\end{array}
		$$
		for all $x,y\in A$ and for all $\alpha$.  We write $\HS_k(A;m)$ (resp.
		$\HS_k(A;\infty)=\HS_k(A)$) for the set of HS-derivations of $A$ (over
		$k$) of length $m$ (resp. $\infty$).

	\end{Def}
	
	A   HS-derivation $D$ of $A$ over $k$ can also be seen as  a power series $\sum_{\alpha=0}^m D_\alpha t^\alpha \in \End_k(A)\lbr t\rbr_m$. In fact, $\HS_k(A;m)$ is a (multiplicative) sub-group   of $\mathcal U(\End_k(A)\lbr t\rbr_m)$, the group of units of $\End_k(A)\lbr t\rbr_m$. The group operation in $\HS_k(A;m)$ is   given by	$
	(D\circ D')_\alpha=\sum_{i+j=\alpha} D_i\circ D_j'$, and the identity element of $\HS_k(A;m)$ is $\mathbb{I}$ with $\mathbb I_0=\Id$ and $\mathbb I_\alpha=0$ for all $\alpha=1,\ldots, m$. Observe that   any HS-derivation $D\in \HS_k(A;m)$ determines and is determined by
	the $k$-algebra homomorphism
	$$
	\varphi_D: a\in A \longmapsto a+\sum_{\alpha\geq 1}^m D_\alpha(a)t^\alpha \in
	A\lbr t \rbr_m.
	$$
	
	Given positive integers  $1\leq n \leq m$, there is a  group   homomorphism $\tau_{m,n}: \HS_k(A;m) \to \HS_k(A;n)$ corresponding to the obvious truncation  map. 
	We have the following identity of groups
	\begin{equation*} \label{eq:limit-finite}
		\HS_k(A;\infty)
		= \lim_{\stackrel{\longleftarrow}{\substack{m}}} \HS(A;m).
	\end{equation*}

	\medskip
	
	If $J$ is an ideal of $A$, a $k$-derivation $\delta:A\to A$ is called {\em $J$-logarithmic} if $\delta(J) \subset J$. The set of
	$J$-logarithmic $k$-derivations is an $A$-submodule of $\Der_k(A)$
	denoted by $\Der_k(\log J)$. Analogously, we define $J$-logarithmic HS-derivations of length $m$, which is a subgroup of $\HS_k(A;m)$:
	\begin{Def}\label{Def-HSLogaritmica} Let $m\in \overline \NN$.
		We say that $D\in \HS_k(A;m)$ is {\em $J$-logarithmic} if $D_\alpha(J)\subseteq J$ for all $\alpha=0,\ldots, m$. The group of $J$-logarithmic HS-derivations of length $m$ is denoted by $\HS_k(\log J;m)$ and $\HS_k(\log J):=\HS_k(\log J;\infty)$.
	\end{Def}
	
	
	\begin{prop}\cite[Proposition 1.2.2]{TesisMP} 
		\label{Prop-HSsobreyectiva}
		If $A = k[x_i|\  i \in \mathcal I]/J$ for some ideal $J\subseteq k[x_i| \ i \in \mathcal I]$ then the map
		$$
		\begin{array}{rccc}
			\Pi_{\HS,m}:&
			\HS_k(\log J;m)&\to& \HS_k(R/J;m)\\
			&(D_i)_i&\mapsto& (\overline D_i) _i
		\end{array}
		$$
		where $\overline D_i (a+J)=D_i(a)+J$, is a surjective group homomorphism    for all $m \in \overline\N$.
	\end{prop}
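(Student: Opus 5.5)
The plan is to check the two claims separately: first that $\Pi_{\HS,m}$ is well defined and a group homomorphism, then that it is surjective. Surjectivity is the real content, and I would prove it by lifting the ring homomorphism attached to a HS-derivation.

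Write $P=k[x_i\mid i\in\mathcal I]$ and $A=P/J$ (the target in the statement should read $\HS_k(A;m)$, i.e.\ $\HS_k(P/J;m)$).

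\emph{Well-definedness and homomorphism property.} If $D\in\HS_k(\log J;m)$, then $D_i(J)\subseteq J$ for every $i$, so $\overline D_i(a+J)=D_i(a)+J$ is a well defined $k$-linear map on $A$. The Leibniz rules $\overline D_\alpha(\bar x\bar y)=\sum_{i+j=\alpha}\overline D_i(\bar x)\overline D_j(\bar y)$ and $\overline D_0=\Id$ pass to the quotient directly. Since composition also passes to the quotient, $\overline{(D\circ D')}_\alpha=\sum_{i+j=\alpha}\overline D_i\circ\overline{D'_j}$. Hence $\Pi_{\HS,m}$ is a group homomorphism.

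\emph{Surjectivity.} Let $E\in\HS_k(A;m)$, with associated $k$-algebra homomorphism $\varphi_E:A\to A\lbr t\rbr_m$. Let $\pi:P\to A$ be the projection, and let $\pi_t:P\lbr t\rbr_m\to A\lbr t\rbr_m$ be the coefficientwise reduction, which is surjective. For each $i\in\mathcal I$:
\begin{itemize}
\item write $\varphi_E(\bar x_i)=\bar x_i+\sum_{\alpha\ge1}E_\alpha(\bar x_i)t^\alpha$;
\item choose lifts $f_{i,\alpha}\in P$ with $\pi(f_{i,\alpha})=E_\alpha(\bar x_i)$ for $\alpha\ge1$. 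This needs the axiom of choice if $\mathcal I$ is infinite; for $m=\infty$ it means choosing infinitely many coefficients, which causes no trouble.
\end{itemize}
Because $P$ is a free (polynomial) $k$-algebra, there is a unique $k$-algebra homomorphism
$$\psi:P\to P\lbr t\rbr_m,\qquad x_i\mapsto x_i+\sum_{\alpha\ge1}f_{i,\alpha}t^\alpha.$$
Its constant term is the identity, so $\psi=\varphi_D$ for some $D\in\HS_k(P;m)$.

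By construction, $\pi_t\circ\psi$ and $\varphi_E\circ\pi$ agree on every generator $x_i$. Both are $k$-algebra homomorphisms $P\to A\lbr t\rbr_m$, so they coincide. Consequently, for $a\in J$ we get $\pi_t(\psi(a))=\varphi_E(0)=0$, that is, $D_\alpha(a)\in J$ for all $\alpha$. So $D\in\HS_k(\log J;m)$, and the same identity says $\overline D_\alpha=E_\alpha$, i.e.\ $\Pi_{\HS,m}(D)=E$.

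\emph{Main point of care.} The only step needing attention is the case $m=\infty$. There the universal property of the polynomial ring must be applied with target the complete ring $P\lbr t\rbr$, and this works because we only prescribe the images of the $x_i$. Alternatively, one can build compatible lifts for each finite $m$ and pass to the inverse limit $\HS_k(A;\infty)=\varprojlim_m\HS_k(A;m)$. The direct construction above avoids any compatibility issue, since it is done once for all $\alpha$.
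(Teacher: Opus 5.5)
Your proof is correct and complete. That includes the case $m=\infty$, where the universal property of $P$ only needs $P\lbr t\rbr$ to be a commutative $k$-algebra, and your correction that the target should read $\HS_k(A;m)$.

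The paper gives no argument of its own here; it only cites \cite[Proposition 1.2.2]{TesisMP}. Your route is the standard argument behind that result: lift the values $E_\alpha(\bar x_i)$ to $P$ via the universal property of the polynomial ring, then compare $\pi_t\circ\varphi_D$ with $\varphi_E\circ\pi$ to obtain both $J$-logarithmicity and $\Pi_{\HS,m}(D)=E$.
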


		\begin{Def}\label{Log-IntHS} 
			
			For a given  $n\in \overline \N$, a $k$-derivation   $\delta:A\to A$   is {\em $n$-integrable} (over $k$) if there is a  HS-derivation $D\in \HS_k(A; n)$ such that $D_1=\delta$.
			Any such $D$
			is an  {\em $n$-integral} of $\delta$. The set of $n$-integrable $k$-derivations of $A$ is denoted by $\IDer_k(A; n)$ and $\IDer_k(A):=\IDer_k(A;\infty)$. 
			
			More generally, a HS-derivation $D\in \HS_k(A;m)$ is {\em $n$-integrable} (over $k$), where $m\leq n$, if there is a HS-derivation $E\in \HS_k(A;n)$ such that $\tau_{n,m}(E)=D$. Any such $E$ will be called an $n$-integral of $D$ and the set of $n$-integrable HS-derivation of $A$ (over $k$) of length $m$ is denoted by $\IHS_k(A;m;n)$.

			Let $J$ be an ideal of $A$. We say that $\delta$ is {\em $J$-logarithmically $n$-integrable} if there exists $D\in \HS_k(\log J;n)$ such that $D$ is an $n$-integral of
			$\delta$. We use $\IDer_k(\log J;n)$ to denote the set of $J$-logarithmically
			$n$-integrable derivations and $\IDer_k(\log
			J):=\IDer_k(\log J; \infty)$.
		\end{Def}

	The sets $\IDer_k(A;m)$ and $\IDer_k(\log J;m)$ are $A$-submodules of $\Der_k(A)$. We have the following chains:
	\begin{equation}
		\label{chain1}
		\Der_k(A)=\IDer_k(A;1)\supseteq \IDer_k(A;2)\supseteq \ldots \supseteq \IDer_k(A),
	\end{equation}
	and 
	\begin{equation*}
		\label{chain2}
		\Der_k(\log J)=\IDer_k(\log J;1)\supseteq \IDer_k(\log J;2)\supseteq \ldots \supseteq \IDer_k(\log J).  
	\end{equation*}

	As a consequence of Proposition \ref{Prop-HSsobreyectiva}, if $A$ is a $k$-algebra, i.e., if $A=k[x_i| \ x_i\in \mathcal I]/J$, there exist surjective maps of $A$-modules (see \cite[Corolario 1.2.3]{TesisMP}): 
	$$
	\IDer_k(\log J;m)\ni\delta\longrightarrow \overline\delta\in \IDer_k(A;m) \ \  \mbox{where $\overline \delta(r+J)=\delta(r)+J$, \ } \forall m\in \overline\NN.
	$$


	\color{black}
	
	If $\QQ\subset k$, then any derivation of $A$ is $\infty$-integrable,   and hence  $\Der_k(A)=\IDer_k(A)$ (see \cite[p.230]{mat-intder-I}). However, some of the  containments in sequence  (\ref{chain1})  could be strict   when the characteristic is positive (see \cite[Examples 1 to 3]{mat-intder-I}) and in this case we say that $A$ has a leap. Namely: 
	\begin{Def}\label{Def-Leap}
		 We say that an integer $s>1$ is a leap of $A$ over $k$ if $\IDer_k(A;s-1)\neq \IDer_k(A;s)$. The set of leaps of $A$ over $k$ is denoted by $\Leap_k(A)$.
	\end{Def}

	We recall the following results that we will use in the next section. 
	
		\begin{lem}\label{Lem-isom-isom}\cite[Lemma 1.1.5]{TesisMP}
		Suppose $f:A\to B$ is an isomorphism of $k$-algebras. Then  the map 
		$$\begin{array}{ccc}
			\HS_k(A;m) &\to & \HS_k(B;m)\\
			(D_r)_r &\mapsto &D_f =(f\circ D_r\circ f^{-1})
		\end{array}
		$$
		is a group isomorphism.
	\end{lem}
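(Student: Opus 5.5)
The plan is to check the three things the statement packages together: each $D_f$ is an HS-derivation of $B$ over $k$, the assignment $D\mapsto D_f$ is compatible with the group law, and it has an inverse.

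\emph{$D_f$ is an HS-derivation.} Write $D_f=(f\circ D_r\circ f^{-1})_r$. Since $f$ and $f^{-1}$ are $k$-linear, each $f\circ D_r\circ f^{-1}$ is $k$-linear. For $r=0$ we get $f\circ \Id_A\circ f^{-1}=\Id_B$. For $b,b'\in B$ put $a=f^{-1}(b)$ and $a'=f^{-1}(b')$. Since $f^{-1}$ is a ring homomorphism, $f^{-1}(bb')=aa'$. Applying the Leibniz rule for $D$ and then the multiplicativity of $f$ gives
$$
(D_f)_\alpha(bb')=f\Big(\sum_{i+j=\alpha}D_i(a)D_j(a')\Big)=\sum_{i+j=\alpha}(D_f)_i(b)\,(D_f)_j(b').
$$
So $D_f\in\HS_k(B;m)$. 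The same argument works for $m=\infty$, since every condition involves only finitely many indices.

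\emph{Compatibility with the group law.} We compute
$$
(D_f\circ D'_f)_\alpha=\sum_{i+j=\alpha} f D_i f^{-1} f D'_j f^{-1}=f\Big(\sum_{i+j=\alpha}D_iD'_j\Big)f^{-1}=((D\circ D')_f)_\alpha,
$$
so the map is a group homomorphism. It also sends $\mathbb I$ to $\mathbb I$, though that already follows from being a homomorphism.

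\emph{Bijectivity.} Apply the same construction to the $k$-algebra isomorphism $f^{-1}:B\to A$. This gives a map $\HS_k(B;m)\to\HS_k(A;m)$, $E\mapsto E_{f^{-1}}=(f^{-1}E_rf)_r$. It is a two-sided inverse, because $(D_f)_{f^{-1}}=D$ and $(E_{f^{-1}})_f=E$ by associativity of composition. Hence the map is a group isomorphism.

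Every step is a routine verification, so there is no real obstacle. The only point needing care is to use both that $f^{-1}$ is multiplicative and that $f$ is $k$-linear, so that each $(D_f)_r$ is again a $k$-linear map.
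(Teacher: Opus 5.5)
Your proof is correct and complete. You verify that each $f\circ D_r\circ f^{-1}$ is $k$-linear with $(D_f)_0=\Id_B$ and satisfies the Leibniz rule, that $(D\circ D')_f=D_f\circ D'_f$, and that $E\mapsto (f^{-1}\circ E_r\circ f)_r$ is a two-sided inverse; the paper gives no proof of its own and only cites Lemma 1.1.5 of the thesis, for which this direct verification is the routine argument.
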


		\begin{teo}\cite[Teorema 27.1]{Ma}\label{Teo-Smooth}
				If $A$ is $0$-smooth over $k$, then a HS-derivation of $A$ over $k$ of  length $m$ is $\infty$-integrable. 
			\end{teo}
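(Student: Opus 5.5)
This is a classical result (Matsumura, Theorem 27.1). The idea is to lift the HS-derivation one step at a time using the lifting property of $0$-smoothness, and then pass to the inverse limit. Fix $D\in \HS_k(A;m)$ and view it as the $k$-algebra homomorphism $\varphi_D: A\to A\lbr t\rbr_m$. It suffices to show that every $D\in\HS_k(A;n)$ with $n\geq m$ extends to some $E\in \HS_k(A;n+1)$ with $\tau_{n+1,n}(E)=D$. Iterating then gives a compatible sequence $D=D^{(m)}, D^{(m+1)},\ldots$ with $\tau_{n+1,n}(D^{(n+1)})=D^{(n)}$. By the identity $\HS_k(A;\infty)=\varprojlim_m \HS_k(A;m)$ this sequence defines an element of $\HS_k(A)$ truncating to $D$, so $D$ is $\infty$-integrable.

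For the one-step extension, consider the surjection of $k$-algebras $\pi: A\lbr t\rbr_{n+1}\to A\lbr t\rbr_n$. Its kernel is $N=t^{n+1}A\lbr t\rbr_{n+1}$, and $N^2=0$ since $2(n+1)>n+1$. Make $A\lbr t\rbr_{n+1}$ into a $k$-algebra via $k\to A\to A\lbr t\rbr_{n+1}$ (constants), and regard $\varphi_D: A\to A\lbr t\rbr_n$ as a $k$-algebra map. Because $A$ is $0$-smooth over $k$ (with the discrete topology, as is implicit here), there is a $k$-algebra homomorphism $\psi: A\to A\lbr t\rbr_{n+1}$ with $\pi\circ\psi=\varphi_D$. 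Write $\psi(a)=\sum_{\alpha=0}^{n+1}E_\alpha(a)t^\alpha$. Then:
\begin{itemize}
\item the condition $\pi\circ\psi=\varphi_D$ gives $E_\alpha=D_\alpha$ for $\alpha\leq n$, and in particular $E_0=\Id_A$;
\item multiplicativity and $k$-linearity of $\psi$ give exactly the Leibniz conditions of Definition \ref{DefHS}.
\end{itemize}
Hence $E\in\HS_k(A;n+1)$ and $\tau_{n+1,n}(E)=D$.

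The only point needing care is the precise meaning of $0$-smooth, namely the lifting property for $k$-algebras $C$ with a nilpotent ideal $N$ (here square-zero). The target $C=A\lbr t\rbr_{n+1}$ carries the $k$-algebra structure through constants, and $\varphi_D$ is indeed $k$-linear because each $D_\alpha$ is $k$-linear. With these checked, the argument is routine. No limit or topology issue arises in passing to $\infty$, because HS-derivations of length $\infty$ are exactly compatible systems of finite ones.
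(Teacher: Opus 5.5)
Your proof is correct and is essentially the standard argument behind the cited result (Matsumura's proof of Theorem 27.1). You lift $\varphi_D$ through the square-zero extension $A\lbr t\rbr_{n+1}\to A\lbr t\rbr_n$ using $0$-smoothness, observe that compatibility with truncation forces $E_\alpha=D_\alpha$ for $\alpha\le n$, and pass to the inverse limit; the paper itself gives no proof of this statement, only the citation.
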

	\medskip

\section{Main Theorem}\label{Sec-MainTheorem}

In this section, we establish the main result of the present paper (see Theorem \ref{teo-Equivalencias}). Specifically, we show that, under the assumption of the existence of $p$-basis, $p^e$-integrability  for some positive integer $e$ is equivalent to the existence of $p^{e+1}$-basis. We recall the definition of this latter notion and some properties.

\begin{Def}\cite[Definition 3.1.1]{FurNii2002}\label{pebase}
	Let $R$ be a ring of prime characteristic $p$ and let $k\subset R$ be a subring.  We will say that a  subset $B\subset R$ is  {\em $p^e$-independent over $k$} if the set $B_e:=\{b_1^{\alpha_1}\cdots b_m^{\alpha_m} \ | \ b_i\in B, \ b_i\neq b_j, \ 0\leq \alpha_i<p^e\}$ is linearly independent over   $R^{p^e}[k]$. 
	We will say that $B$ is a  {\em $p^e$-basis of  $R$ over  $k$} if  $B$ is  $p^e$-independent over  $k$ and  $R=R^{p^e}[k][B]$. 
\end{Def}

Recall that if $B=\{b_i\ | \ i\in \mathcal I\}$ is a subset of $R$, $B$ is a $p^e$-basis of $R$ over $k$ if and only if $B^{p^t}=\{b_i^{p^t} \ | \ i \in \mathcal I\}$ is a $p$-basis of $R^{p^t}[k]$ over $R^{p^{t+1}[k]}$ for all $0\leq t<e$. Moreover, if $R$ is reduced and $R$ has a $p$-basis over $R^p$, then $R$ has a $p^e$-basis over $R^{p^e}$.  We also recall the following result that is an immediately consequence of \cite[(3.1.2) Lemma]{FurNii2002}:

\begin{lem}\label{Lem-Isomorfismo} 
	Let $R$ be a ring of prime characteristic $p$, let $k\subset R$ be a subring and let $B\subset R$ be a $p^e$-basis over $k$. Then there is an isomorphism of  rings  
	$$
	R\simeq R_{e,k}:=R^{p^e}[k][T_b\ | \ b\in B]/\langle T_b^{p^e}-b^{p^e}\rangle.
	$$
	that is also an isomorphism of  $R^{p^e}[k]$-algebras.
\end{lem}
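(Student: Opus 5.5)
The plan is to build the natural surjection from the polynomial ring and show that its kernel is the ideal $\langle T_b^{p^e}-b^{p^e}\rangle$. Write $S:=R^{p^e}[k]$. Define the $S$-algebra homomorphism
$$
\psi: S[T_b\mid b\in B]\longrightarrow R,\qquad T_b\mapsto b .
$$
Because $B$ is a $p^e$-basis, $R=S[B]$, so $\psi$ is surjective. Each $T_b^{p^e}-b^{p^e}$ lies in $\ker\psi$, since $b^{p^e}\in R^{p^e}\subset S$. Hence $\psi$ factors through a surjective $S$-algebra map $\overline\psi: R_{e,k}\to R$. The isomorphism claimed in the lemma is this $\overline\psi$ (or its inverse), and it is automatically an isomorphism of $R^{p^e}[k]$-algebras.

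It remains to prove that $\overline\psi$ is injective. In $R_{e,k}$ we may rewrite $T_b^{p^e}=b^{p^e}\in S$ repeatedly, reducing any exponent of a variable to be below $p^e$. So every element of $R_{e,k}$ is the class of a finite $S$-linear combination $\sum_\alpha s_\alpha T^\alpha$, where each $T^\alpha=T_{b_1}^{\alpha_1}\cdots T_{b_m}^{\alpha_m}$ has distinct $b_i$ and $0\le\alpha_i<p^e$. If such an element lies in $\ker\overline\psi$, then $\sum_\alpha s_\alpha b_1^{\alpha_1}\cdots b_m^{\alpha_m}=0$ in $R$. These monomials belong to $B_e$ and are pairwise distinct, so $p^e$-independence of $B$ over $k$ forces every $s_\alpha=0$. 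Therefore the class is zero, and $\overline\psi$ is injective.

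The one point that needs care is that the reduced monomials $b^\alpha$ with distinct $\alpha$ really are distinct elements of $B_e$. Distinct exponent vectors on distinct elements of $B$ are counted as distinct in the definition of $B_e$, and linear independence of a family is read with respect to this indexing. The statement should be interpreted that way. Everything else is formal: the reduction of exponents uses only that $b^{p^e}$ lies in the coefficient ring $S$.
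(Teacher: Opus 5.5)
Your proof is correct. The paper gives no argument of its own for this lemma: it only records it as an immediate consequence of \cite[(3.1.2) Lemma]{FurNii2002}. What you wrote is therefore a self-contained replacement for that citation, working directly from Definition \ref{pebase}. The steps are the right ones:
\begin{itemize}
\item the surjection $S[T_b\mid b\in B]\to R$, which exists because $R=S[B]$;
\item the reduction of every exponent below $p^e$, using $b^{p^e}\in S$;
\item $p^e$-independence, which forces the kernel of $\overline\psi$ to be zero;
\item $S$-linearity of $\overline\psi$, which gives the statement about $R^{p^e}[k]$-algebras.
\end{itemize}

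Your caveat about indexing is not pedantry; it is exactly what makes the lemma true. Suppose $B_e$ were read literally as a set of elements of $R$. Take $R=k=\FF_p$. Then $B=\{1\}$ would qualify as a $p^e$-basis, since $B_e=\{1\}$ is linearly independent over $\FF_p$ and $R=R^{p^e}[k][1]$. But $R_{e,k}=\FF_p[T]/\langle (T-1)^{p^e}\rangle$ has $p^{p^e}$ elements, so it cannot be isomorphic to $\FF_p$.

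With the family reading, the monomials $b^0$ and $b^1$ for $b=1$ give the nontrivial relation $b^0-b^1=0$, which excludes this $B$. That reading is the standard one in the $p$-basis literature, and under it your argument goes through verbatim.
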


\begin{lem}\label{lem-IntegrabilidaddeRe}
	Let  $L$ be a ring of prime characteristic  $p>0$ and let  $e\geq 1$ be an integer. Let  $A\subset L$ be a subset  and define: 
	$$
	S:=L[T_a\ | \ a\in A]/\langle T_a^{p^e}-a \ | \ a\in A\rangle. 
	$$
	Then, 
	$$
	\HS_{L}(S;m)=\IHS_{L}(S;m;p^e-1)
	$$
	for all $1\leq m\leq p^e-1$. In particular, $\Der_{L}(S)=\IDer_{L}(S;p^{e}-1)$.
\end{lem}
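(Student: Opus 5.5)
The plan is to work entirely through the description of HS-derivations as ring homomorphisms recalled after Definition \ref{DefHS}. An element $D\in \HS_{L}(S;n)$ is the same thing as an $L$-algebra homomorphism $\varphi_D:S\to S\lbr t\rbr_n$ whose composition with the reduction $t\mapsto 0$ is the identity of $S$. Since $S$ is a quotient of the polynomial ring $L[T_a\ |\ a\in A]$, such a homomorphism is determined by the images of the generators $T_a$. Conversely, a choice of images gives a well-defined homomorphism exactly when the relations $T_a^{p^e}-a$ are sent to $0$.

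The key observation is the following. Let $n\leq p^e-1$ and take any element of the form $f=T_a+t\,g$ in $S\lbr t\rbr_n$. Since we are in characteristic $p$, the Frobenius power is additive, so
$$
f^{p^e}=T_a^{p^e}+t^{p^e}g^{p^e}=T_a^{p^e}=a,
$$
because $t^{p^e}=0$ in $S\lbr t\rbr_n$ (as $p^e>n$). Hence for $n\leq p^e-1$, every assignment $T_a\mapsto T_a+t\,g_a$, with arbitrary $g_a\in S\lbr t\rbr_n$, defines an $L$-algebra homomorphism $\psi:S\to S\lbr t\rbr_n$. Reducing modulo $t$ gives an $L$-algebra endomorphism of $S$ that fixes every $T_a$, so it is the identity. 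Thus $\psi=\varphi_E$ for a unique $E\in\HS_L(S;n)$. Note that $L$-linearity of each $E_i$ is automatic because $\psi$ is an $L$-algebra map.

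Now let $D\in\HS_L(S;m)$ with $1\leq m\leq p^e-1$. Put $n=p^e-1$ and define $\psi:S\to S\lbr t\rbr_{n}$ by
$$
\psi(T_a)=T_a+\sum_{i=1}^m D_i(T_a)t^i,
$$
with all higher coefficients equal to $0$. By the observation above, $\psi=\varphi_E$ for some $E\in\HS_L(S;p^e-1)$. The composition of $\psi$ with the truncation $S\lbr t\rbr_n\to S\lbr t\rbr_m$ and the map $\varphi_D$ are both $L$-algebra homomorphisms $S\to S\lbr t\rbr_m$ that agree on every $T_a$. Since these elements generate $S$ over $L$, the two maps coincide, so $\tau_{p^e-1,m}(E)=D$. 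This gives $\HS_L(S;m)\subseteq\IHS_L(S;m;p^e-1)$, and the reverse inclusion is trivial. The case $m=1$ yields $\Der_L(S)=\IDer_L(S;p^e-1)$.

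I expect no serious obstacle. The only point needing care is the well-definedness of $\psi$ on the quotient $S$, which is exactly the Frobenius computation above and is where the bound $n\leq p^e-1$ is used. The same argument breaks down at length $p^e$, since then the term $t^{p^e}g^{p^e}$ no longer vanishes; this is consistent with the role of $p^e$ in Theorem \ref{teo-Equivalencias}.
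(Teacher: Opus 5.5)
Your proof is correct, and it takes a more direct route than the paper's, although the key computation is the same.

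\textbf{The paper's route.} It lifts $D$ to an $I$-logarithmic HS-derivation $E$ of the polynomial ring $L[T_a\ |\ a\in A]$ via Proposition \ref{Prop-HSsobreyectiva}. It then integrates $E$ to infinite length there using $0$-smoothness (Theorem \ref{Teo-Smooth}). Next it uses Frobenius to see that $\widetilde E_i(T_a^{p^e}-a)=0$ for $0<i<p^e$. Hence the truncation to length $p^e-1$ is still $I$-logarithmic and descends to $S$.

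\textbf{Your route.} You skip the lift entirely. You describe HS-derivations as $L$-algebra maps $S\to S\lbr t\rbr_n$ that reduce to the identity. You then define the extension of $D$ directly on the generators $T_a$ and check that the relations are respected. That check is the same identity $(T_a+tg)^{p^e}=T_a^{p^e}$ in $S\lbr t\rbr_{p^e-1}$.

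\textbf{What your version gains.} It is self-contained: for a polynomial ring, Theorem \ref{Teo-Smooth} amounts to saying that images of the variables may be chosen freely, and you use this directly. It also checks well-definedness on generators of the ideal through a ring homomorphism. This avoids the implicit Leibniz-rule step the paper needs to pass from $\widetilde E_i(T_a^{p^e}-a)=0$ to $E'_i(I)\subseteq I$. Finally, it yields slightly more: every family $(g_a)$ gives an element of $\HS_L(S;p^e-1)$. So the higher terms of the integral on the $T_a$ can be prescribed arbitrarily.

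\textbf{What the paper's version gains.} It fits the general lift--integrate--descend scheme for logarithmic HS-derivations from \cite{TesisMP}. That scheme transfers directly to other presentations, wherever one can control the truncated integrals on generators of the ideal.
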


\begin{proof}
	Let $D\in \HS_L(S;m)$, let $I=\langle T_a^{p^e}-a \ | \  a\in A\rangle\subset  L[T_a\ | \ a\in A]$,  and consider the group homomorphism 
	$$
	\begin{array}{rccc}
		\Pi_{\HS,m}:&
		\HS_L(\log I;m)&\to& \HS_L(S;m)\\
		&(D_i)_i&\mapsto& (\overline D_i) _i
	\end{array}
	$$
	as in 	Proposition \ref{Prop-HSsobreyectiva}.  By  Proposition \ref{Prop-HSsobreyectiva}, there is some  $E\in \HS_L(\log I;m)$ whose image under   $\Pi_{\HS,m}$ is $D$. By Theorem   \ref{Teo-Smooth},  $E$ is  $\infty$-integrable over $L$. Let $\widetilde E$ be an $\infty$-integral of  $E$. Then    $\widetilde E_i(T_{{a}}^{p^e}-a)=\widetilde E_i(T_{{a}}^{p^e})=0$ for $i<p^e$,   since  $p^e$ does not divide $i$. Hence, $E':=\tau_{\infty,p^e-1}(\widetilde E)\in \HS_L(\log I; p^e-1)$. Therefore,  $\Pi_{\HS,p^e-1}(E')\in \HS_L(S;p^e{-1})$ is a $(p^e-1)$-integral of $D$. 
\end{proof}

\begin{teo}\label{teo-Equivalencias}
	Let $R$ be a ring of prime characteristic $p>0$ and $e\geq 1$ be a positive integer. Assume that $R$ has a $p$-basis over a subring $k$. Then the following properties are equivalent:
	\begin{enumerate}
		\item[1.]  $R$ has a $p^e$-basis over $k$.	
		\item[2.] $R$ has a $p^s$-basis over $k$ for all $s \leq e$.
		
		\item[3.] $\Der_k(R) = \IDer_k(R; p^{e}-1)$.
		
		\item[4.] $\HS_k(R; m) = \IHS_k(R; m; p^{e}-1)$ for all $m \leq p^e-1$.
	\end{enumerate}
\end{teo}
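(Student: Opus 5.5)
We will prove the cycle $1\Rightarrow 4\Rightarrow 3\Rightarrow 2\Rightarrow 1$. The implications $4\Rightarrow 3$ (take $m=1$) and $2\Rightarrow 1$ (take $s=e$) are immediate, so the work is in $1\Rightarrow 4$ and $3\Rightarrow 2$.

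\emph{$1\Rightarrow 4$.} Let $B$ be a $p^e$-basis of $R$ over $k$ and put $L:=R^{p^e}[k]$ and $A:=\{b^{p^e}\mid b\in B\}\subset L$. By Lemma \ref{Lem-Isomorfismo} there is an isomorphism of $L$-algebras $f\colon R_{e,k}\to R$, where $R_{e,k}$ is exactly the ring $S$ of Lemma \ref{lem-IntegrabilidaddeRe}. That lemma gives $\HS_L(S;m)=\IHS_L(S;m;p^e-1)$, and Lemma \ref{Lem-isom-isom} transports this equality to $R$, since conjugation by $f$ commutes with truncation. Hence $\HS_L(R;m)=\IHS_L(R;m;p^e-1)$.

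It remains to replace $L$ by $k$. We need that every $D\in\HS_k(R;m)$ with $m\le p^e-1$ is automatically $R^{p^e}$-linear, and hence $L$-linear. This follows from the standard fact that for $D\in \HS_k(R;m)$ one has $D_i(x^{p^e}y)=\sum_j D_{j p^e}(x)\cdots$. More simply, $\varphi_D(x^{p^e})=\varphi_D(x)^{p^e}=x^{p^e}+(\text{terms in }t^{p^e})$, which vanishes modulo $t^{m+1}$. So $D_i(x^{p^e})=0$ for $1\le i\le m$, and then $D_i(x^{p^e}y)=x^{p^e}D_i(y)$ by the Leibniz rule. Thus $\HS_k(R;m)=\HS_L(R;m)$ for $m\le p^e-1$. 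Since integrals over $L$ are in particular over $k$, statement 4 follows.

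\emph{$3\Rightarrow 2$.} This is the main obstacle. I would argue by induction on $s\le e$: assume $B$ is a $p^{s}$-basis with $s<e$, and show $B$ is a $p^{s+1}$-basis. Notice that 3 also gives $\Der_k(R)=\IDer_k(R;p^{s+1}-1)$, because $p^{s+1}-1\le p^e-1$ and the chain (\ref{chain1}) is decreasing. Using the criterion recalled after Definition \ref{pebase}, it suffices to show that $B^{p^{s}}$ is a $p$-basis of $R^{p^s}[k]$ over $R^{p^{s+1}}[k]$. Generation is automatic from $R=R^{p^{s}}[k][B]$ together with the $p$-basis $B$ (raise to the $p^s$ power), so the issue is $p$-independence of $B^{p^s}$.

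Suppose there is a nontrivial relation $\sum_\alpha c_\alpha b^{p^s\alpha}=0$ with $c_\alpha\in R^{p^{s+1}}[k]$, $\alpha$ ranging over exponents with $0\le\alpha_i<p$, of minimal length. Since $B$ is a $p$-basis, for each $b\in B$ there is a derivation $\partial_b\in\Der_k(R)$ with $\partial_b(b')=\delta_{bb'}$. By 3, $\partial_b$ has a $(p^{s+1}-1)$-integral $D$. Apply $D_{p^s}$ to the relation. The coefficients $c_\alpha$ are killed by $D_i$ for $1\le i< p^{s+1}$, by the argument of the previous paragraph together with $k$-linearity. Also $\varphi_D(b^{p^s})=(b+t+\cdots)^{p^s}$, whose $t^{p^s}$-coefficient is $1+(\text{correction } D_1(b)^{p^s}\text{-type terms})$. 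Computing $D_{p^s}$ of the monomials $b^{p^s\alpha}$ in this way should lower the exponent of $b$ and produce a shorter nontrivial relation, contradicting minimality. Equivalently, one shows that the $p$-th root map behaves like $\partial/\partial(b^{p^s})$. The delicate point is controlling the higher components $D_j(b')$ for $b'\neq b$. I would first try to normalize the integral so that $D_j(b')=0$ for all $b'\in B$ and $j\ge2$, using the $p^s$-basis structure (via Lemma \ref{Lem-Isomorfismo} and Proposition \ref{Prop-HSsobreyectiva}). Failing that, I would track these terms, noting that they contribute only through $p^s$-th powers, which lie in $R^{p^s}$.
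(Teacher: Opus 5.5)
Your $1\Rightarrow 4$ is correct and is the paper's argument. You even supply the justification, left implicit in the paper, that $\HS_k(R;m)=\HS_{R^{p^e}[k]}(R;m)$ for $m<p^e$. In $3\Rightarrow 2$ you adopt the paper's strategy: integrate the dual derivation $\partial_b$ and apply the component $D_{p^s}$ to a minimal relation. But there is a genuine gap: you never carry out the computation, the obstacle you single out does not exist, and the reduction step as you state it does not work. The missing observation is the Frobenius identity in the characteristic-$p$ ring $R\lbr t\rbr_{p^{s+1}-1}$: for every $x\in R$, $\varphi_D(x^{p^s})=\varphi_D(x)^{p^s}=\sum_j D_j(x)^{p^s}t^{jp^s}$. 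This gives the following.
\begin{itemize}
\item $D_i(x^{p^s})=0$ for $0<i<p^s$, and $D_{p^s}(x^{p^s})=D_1(x)^{p^s}$ exactly. So the $t^{p^s}$-coefficient of $\varphi_D(b^{p^s})$ is $1$, with no correction terms.
\item For $b'\neq b$, $D_i(b'^{p^s})=0$ for all $1\le i<2p^s$, because the components $D_j(b')$ with $j\ge 2$ only appear in degree $jp^s\ge 2p^s$. No normalization of the integral is needed. Your fallback, that these terms enter through $p^s$-th powers lying in $R^{p^s}$, would not help, since $D_{p^s}$ does not kill $R^{p^s}$.
\item Consequently $D_{p^s}$ restricted to $R^{p^s}[k]$ satisfies the Leibniz rule, since the cross terms $D_u(x)D_v(y)$ with $0<u,v<p^s$ vanish. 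It kills $R^{p^{s+1}}[k]$ and every $b'^{p^s}$ with $b'\neq b$, and sends $b^{p^s}$ to $1$. In other words, it is $\partial/\partial(b^{p^s})$.
\end{itemize}

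The minimality argument also does not close as stated. Applying $D_{p^s}$ for a fixed $b$ to a relation of minimal length need not shorten it: if every monomial has $\alpha_b\ge 1$, as in $c_1b^{p^s}+c_2b^{p^s}b'^{p^s}=0$, the derived relation $c_1+c_2b'^{p^s}=0$ has the same length. You can repair this in either of two ways.
\begin{itemize}
\item Write $D^{(b)}$ for an integral of $\partial_b$. Choose $\alpha^*$ with $c_{\alpha^*}\neq 0$ of maximal total degree, and apply $D^{(b)}_{p^s}$ exactly $\alpha^*_b$ times for each $b$. Each step again yields a polynomial expression in $B^{p^s}$ over $R^{p^{s+1}}[k]$ equal to $0$. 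What remains is $\alpha^*!\,c_{\alpha^*}=0$, where $\alpha^*!=\prod_b\alpha^*_b!$ is a unit mod $p$, a contradiction.
\item Or follow the paper. Take a maximal subset $C\subset B^{p^s}$ that is $p$-independent over $R^{p^{s+1}}[k]$, suppose some $a=b^{p^s}\notin C$, and take a nontrivial relation $\sum_{i=0}^r c_ia^i=0$ with $c_i\in R^{p^{s+1}}[k][C]$ of minimal degree $r\ge 1$. By the identity above, $D_{p^s}$ is $R^{p^{s+1}}[k][C]$-linear. Applying it yields $\sum_{i=1}^r ic_ia^{i-1}=0$ with $rc_r\neq 0$, contradicting minimality.
\end{itemize}
With these two points supplied, your proof is complete and coincides with the paper's.
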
	
	\begin{proof}
		It is clear that $2\Rightarrow 1$ and $4 \Rightarrow 3$. We prove that $1\Rightarrow 4$. First observe that for  
		$m<p^e$,  $\HS_{k}(R;m)=\HS_{R^{p^e}[k]}(R;m)$, hence, it suffices to show that 
		$\HS_{R^{p^e}[k]}(R;m)=\IHS_{R^{p^e}[k]}(R;m;p^e-1)$.   
		By Lemma \ref{Lem-Isomorfismo}, since $R$ has a $p^e$-basis,  there is an isomorphism   of $R^{p^e}[k]$-algebras,      $R\simeq R_{e,k}$. Now this implication follows from Lemmas \ref{lem-IntegrabilidaddeRe}  and \ref{Lem-isom-isom}.
		
		It remains to prove that $2\Rightarrow 3.$ For this, we follow the ideas of  the proof of \cite[\S 2 Lemma]{Fur81}. Let $B\subset R$ be a $p$-basis of $R$ over $k$. We will show that $B$ is a $p^s$-basis for all $1 \leq s \leq e$, and we proceed by induction on $s$. For $s = 1$, the result is clear. Suppose that $B$ is a $p^t$-basis for all $1 \leq t < s\leq e$, and we prove it for $s$.
		
		Since $B$ is a $p^{s-1}$-basis, we have that $B^{p^{t}}=\{b^{p^{t}}\mid b\in B\}$ is a $p$-basis of $R^{p^t}[k]$ over $R^{p^{t+1}}[k]$ for all $1\leq t\leq s-2$. Therefore, it remains to show that $B^{p^{s-1}}$ is a $p$-basis of $R^{p^{s-1}}[k]$ over $R^{p^s}[k]$. It is clear that $R^{p^{s-1}}[k]=R^{p^s}[k,B]$, so we need to prove that $B^{p^{s-1}}$ is $p$-independent over $R^{p^s}[k]$. 
		
		Let $C$ be a maximal subset of $B^{p^{s-1}}$ such that $C$ is $p$-independent over $R^{p^s}[k]$. If $C\neq B^{p^{s-1}}$, then there exists $a\in B^{p^{s-1}}\setminus C$ such that $C\cup \{a\}$ is not $p$-independent over $R^{p^{s-1}}[k]$. Hence, we have a relation: 
		\begin{equation}\label{EcTeorema}
				c_0+c_1a+\ldots +c_ra^r=0
		\end{equation}
of minimal degree with $c_r\in R^{p^{s-1}}[k,C]\setminus 0$ and $1\leq r\leq p-1$. Since $a\in B^{p^{s-1}}$, there is $b\in B$ such that $a=b^{p^{s-1}}$. Then, by \cite[Proposition 1.]{Fur81},  there exists $\delta \in \Der_k(R)$ such that $\delta(b)=1$ and $\delta(x)=0$ for all $x\in B\setminus \{b\}$. By hypothesis, $\Der_k(R)=\IDer_k(R;p^{e}-1)$, so let us consider $D\in \HS_k(R;p^{e}-1)$ an $(p^{e}-1)$-integral of $\delta$. Observe that $D_i$ es $R^{p^{s}}[k][C]$-lineal for all $1\leq i\leq p^{s-1}$ because $D_{p^{s-1}}(x^{p^{s-1}})=D_1(x)^{p^{s-1}}=0$ for all $x\in B\setminus \{b\}$.  Now, we have
		$$
		D_{p^s}(c_0+c_1a+\ldots+c_ra^r)=c_1D_{1}(b)^{p^{s-1}}+\ldots c_r D_1(b)^{p^{s-1}}=c_1+2c_2a+ \ldots+rc_ra^{r-1}=0
		$$
		Since $rc_r \neq 0$, this contradicts the minimality of the degree in (\ref{EcTeorema}).
		\end{proof}

\begin{obs}\label{Nota-SobreTeoCasoNopbase}
Note that the previous theorem does not hold if we remove the condition on the existence of a $p$-basis. For example, consider $k=\FF_2$ and $R=\FF_2[x,y]/\langle x^3-y^3\rangle$. It is easy to see that $R$ does not have a $p$-basis over $k$; however, $\Der_k(R)=\IDer_k(R)$ (see \cite[Proposition 4.1]{TiBinomial}). 
\end{obs}

The following corollaries are immediate consequences of the previous result:
\begin{cor}\label{Cor-PrimerSalto}
	Let $R$ be a ring of characteristic $p$ containing a subring $k$. Assume that $R$ has a $p^e$-basis over $k$ for some $e\geq 1$ but does not have a $p^{e+1}$-basis over $k$. Then, $p^e\in \Leap_k(R)$ and $p^s\not\in \Leap_k(R)$ for all $1\leq s<e$. 
\end{cor}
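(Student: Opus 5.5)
We apply Theorem \ref{teo-Equivalencias} twice, first with exponent $e+1$ and then with exponents $s\le e$. Before doing so we must check its standing hypothesis. By definition, a $p^e$-basis with $e\ge1$ is in particular $p$-independent over $R^p[k]\supseteq R^{p^e}[k]$. It also generates: since $R^{p}\subseteq R^{p^e}[k][B]^p\subseteq R^p[k][B]$, we get $R=R^{p^e}[k][B]\subseteq R^p[k][B]$. Equivalently, the recalled characterization says that $B^{p^0}=B$ is a $p$-basis of $R$ over $R^p[k]$. So $R$ has a $p$-basis over $k$ and Theorem \ref{teo-Equivalencias} applies to any exponent.

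\emph{$p^e$ is a leap.} Apply the theorem with $e+1$. Since $R$ has no $p^{e+1}$-basis over $k$, the implication $3\Rightarrow1$ gives
\[
\Der_k(R)\neq \IDer_k(R;p^{e+1}-1).
\]
Apply it again with $e$. Since $R$ does have a $p^e$-basis, $1\Rightarrow3$ gives $\Der_k(R)=\IDer_k(R;p^e-1)$. It remains to locate the first length $n$ with $n\ge p^e$ at which $\IDer_k(R;n)$ drops below $\Der_k(R)$. For this we show that $\IDer_k(R;n)=\IDer_k(R;p^e-1)$ for all $p^e-1\le n\le p^{e+1}-1$, with $n$ a multiple of $p$ excluded as needed, and specifically that the only possible drop in this range is at $n=p^e$.

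The main obstacle is exactly this step. We would prove it directly from the structure isomorphism of Lemma \ref{Lem-Isomorfismo}, together with the idea in the proof of Lemma \ref{lem-IntegrabilidaddeRe}.
\begin{itemize}
\item[(a)] Suppose $D$ is a $(p^e)$-integral of $\delta$. Then the components $D_i$ with $i<p^{e+1}$ not divisible by $p^e$ vanish on $R^{p^{e}}$-type obstructions.
\item[(b)] Set $L=R^{p^{e+1}}[k]$ and lift to a polynomial ring over $L$, using Proposition \ref{Prop-HSsobreyectiva}. By Theorem \ref{Teo-Smooth} this lift integrates to infinite length. The defining relations of $R^{p^e}[k]$ over $L$ involve only $p^e$-th powers, so the only components that can fail to preserve the ideal are $\widetilde E_i$ with $p^e\mid i$. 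Hence a $p^e$-integral extends to a $(2p^e-1)$-integral, and iterating, to a $(p^{e+1}-1)$-integral.
\end{itemize}
Consequently $\IDer_k(R;p^e)\supsetneq\IDer_k(R;p^{e+1}-1)$ is impossible, and so $\IDer_k(R;p^e)\neq\Der_k(R)=\IDer_k(R;p^e-1)$. This shows $p^e\in\Leap_k(R)$. If (b) proves too delicate, the fallback is to show directly that the derivation $\delta$ with $\delta(b)=1$, built in the proof of $2\Rightarrow3$, is not $p^e$-integrable. For that we would use the computation $D_{p^e}(c_0+\cdots+c_ra^r)$ there with $a=b^{p^e}$ and $s=e+1$, which only requires components up to $p^e$.

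\emph{$p^s$ is not a leap for $s<e$.} By $1\Rightarrow2$, $R$ has a $p^{s+1}$-basis for every $s<e$. Then $3$ of Theorem \ref{teo-Equivalencias} with exponent $s+1$ gives $\Der_k(R)=\IDer_k(R;p^{s+1}-1)$. By the chain (\ref{chain1}), $\IDer_k(R;p^s-1)=\IDer_k(R;p^s)=\Der_k(R)$ because $p^s\le p^{s+1}-1$. So $p^s\notin\Leap_k(R)$.
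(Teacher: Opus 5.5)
Your two applications of Theorem~\ref{teo-Equivalencias} are correct: they give $\Der_k(R)=\IDer_k(R;p^e-1)$ and $\Der_k(R)\neq\IDer_k(R;p^{e+1}-1)$. Your argument that $p^s\notin\Leap_k(R)$ for $s<e$ is also correct. The gap is in step (b), your main route to $p^e\in\Leap_k(R)$.

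Put $L=R^{p^{e+1}}[k]$. You can write $R\cong L[T_b]/I$, with $I$ generated by the $f(T^{p^e})$, where $f=\sum_\alpha f_\alpha U^\alpha$ runs over the $L$-relations among the $b^{p^e}$. The lifting argument then does show the following. If $E$ is an $I$-logarithmic lift of a $jp^e$-integral and $\widetilde E$ is an $\infty$-integral of $E$, then $\widetilde E_i(I)\subseteq I$ for all $i<(j+1)p^e$. But at the next multiple of $p^e$,
\[
\widetilde E_{(j+1)p^e}\bigl(f(T^{p^e})\bigr)=\sum_\alpha f_\alpha\,\widetilde E_{j+1}(T^\alpha)^{p^e},
\]
and nothing in your argument puts this in $I$. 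Re-lifting a $((j+1)p^e-1)$-integral gives nothing new either. So ``iterating'' is unjustified. Step (b) only shows that the chain cannot drop strictly between consecutive multiples of $p^e$. Hence the first drop after $p^e-1$ occurs at some $jp^e$ with $1\le j\le p-1$. That settles $p=2$, but not $p\ge 3$.

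The equality $\IDer_k(R;p^e)=\IDer_k(R;p^{e+1}-1)$ does hold here, but it needs another idea. For example, $\theta=D_{p^e}|_{R^{p^e}[k]}$ is an $L$-derivation with $\theta(x^{p^e})=\delta(x)^{p^e}$. So one can put $\theta^j/j!$ in degree $jp^e$ on $R^{p^e}[k]$ and send $b\mapsto\sum_{j<p}\delta^j(b)t^j/j!$.

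Two smaller problems in this part: (a) has no content, and your announced claim that $\IDer_k(R;n)=\IDer_k(R;p^e-1)$ for all $n\le p^{e+1}-1$ contradicts the inequality you had just derived.

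Your fallback is the correct argument and should be the proof. It is exactly the observation that makes the corollary follow from the proof of Theorem~\ref{teo-Equivalencias}. There, in the proof of $3\Rightarrow 2$, the contradiction at level $s$ uses only $D_1,\dots,D_{p^{s-1}}$ (the printed $D_{p^s}$ should be $D_{p^{s-1}}$). So for $s=e+1$ only a $p^e$-integral is needed. Concretely:
\begin{enumerate}
\item[1.] $B$ is a $p^e$-basis but not a $p^{e+1}$-basis, and $R^{p^e}[k]=L[B^{p^e}]$. Hence the family $(b^{p^e})_{b\in B}$ is not $p$-independent over $L$.
\item[2.] Take a maximal $p$-independent subfamily $C$, an index $b$ outside it with $a=b^{p^e}$, and a relation $\sum_{j\le r}c_ja^j=0$ with $c_j\in L[C]$, $c_r\neq 0$, of minimal degree $1\le r\le p-1$.
\item[3.] Take $\delta\in\Der_k(R)$ with $\delta(b)=1$ and $\delta(B\setminus\{b\})=0$.
\item[4.] Suppose $D\in\HS_k(R;p^e)$ had $D_1=\delta$. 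Then $D_i(L[C])=0$ for $1\le i\le p^e$, and $D_{p^e}(a^j)=ja^{j-1}$. Applying $D_{p^e}$ to the relation gives $\sum_j jc_ja^{j-1}=0$ with $rc_r\neq 0$, contradicting minimality.
\end{enumerate}
Hence $\delta\in\Der_k(R)=\IDer_k(R;p^e-1)$ but $\delta\notin\IDer_k(R;p^e)$, that is, $p^e\in\Leap_k(R)$.

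One small slip in your first paragraph: independence of $B_e$ over the smaller ring $R^{p^e}[k]$ does not ``in particular'' give independence of $B_1$ over the larger ring $R^p[k]$. The $p$-basis property comes from the recalled characterization (case $t=0$), which you also cite.
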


\begin{cor}\label{Cor-InfinitoIntegrabilidad}
	Let $R$ be a ring of prime characteristic $p>0$ which contains a subring   $k$.  Asume that $R$ has a $p$-basis over $k$. Then, the following conditions are equivalent:
	\begin{enumerate}
		\item[1.]  $R$ has a $p^e$-basis over $k$ for all $e\geq 1$.
		\item[2.] $\Der_k(R)=\IDer_k(R)$. 
		\item[3.] $\HS_k(R;m)=\IHS_k(R;m)$ for all $m\geq 1$.
	\end{enumerate}
\end{cor}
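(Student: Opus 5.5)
We will deduce Corollary \ref{Cor-InfinitoIntegrabilidad} directly from Theorem \ref{teo-Equivalencias}, applied for every $e\geq 1$, together with the description of $\infty$-integrability as a limit of finite-length integrability. Throughout, $R$ has a $p$-basis over $k$, so the hypothesis of Theorem \ref{teo-Equivalencias} holds for every $e$.

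\emph{$1\Rightarrow 3$.} Fix $m\geq 1$ and $D\in\HS_k(R;m)$. We must build an infinite sequence $E\in\HS_k(R)$ extending $D$, not just finite extensions of each length. Choose $e$ with $p^e-1\geq m$. Condition 1 for $e$ gives, by $1\Rightarrow 4$ of Theorem \ref{teo-Equivalencias}, an extension $E^{(e)}\in\HS_k(R;p^e-1)$ of $D$. Since $R$ also has a $p^{e+1}$-basis, the same implication applied to $E^{(e)}$ (which has length $p^e-1\leq p^{e+1}-1$) gives $E^{(e+1)}\in\HS_k(R;p^{e+1}-1)$ with $\tau_{p^{e+1}-1,p^e-1}(E^{(e+1)})=E^{(e)}$. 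Iterating, we obtain a compatible system $(E^{(e')})_{e'\geq e}$ whose lengths tend to infinity. Via $\HS_k(R;\infty)=\varprojlim_m\HS_k(R;m)$, this system defines $E\in\HS_k(R)$ with $\tau_{\infty,m}(E)=D$. Hence $\HS_k(R;m)=\IHS_k(R;m)$. The key point is that statement 4 of Theorem \ref{teo-Equivalencias} concerns HS-derivations of arbitrary length $m\leq p^e-1$, not only derivations, which is what makes the successive extensions possible. This is the only delicate step.

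\emph{$3\Rightarrow 2$.} This is the case $m=1$, since $\HS_k(R;1)$ is identified with $\Der_k(R)$ via $D\mapsto D_1$, and an $\infty$-integral of $(\Id,\delta)$ is exactly an $\infty$-integral of $\delta$.

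\emph{$2\Rightarrow 1$.} If $\Der_k(R)=\IDer_k(R)$, then for each $e$ the chain (\ref{chain1}) gives
\[
\Der_k(R)=\IDer_k(R)\subseteq\IDer_k(R;p^e-1)\subseteq\Der_k(R),
\]
so $\Der_k(R)=\IDer_k(R;p^e-1)$. By $3\Rightarrow 1$ of Theorem \ref{teo-Equivalencias}, $R$ has a $p^e$-basis over $k$. Since $e$ is arbitrary, condition 1 holds.
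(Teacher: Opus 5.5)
Your proof is correct and follows the paper's. For $1\Rightarrow 3$ you use the same iterated extension through the lengths $p^{e'}-1$ via statement 4 of Theorem \ref{teo-Equivalencias}, then pass to the inverse limit, and $3\Rightarrow 2$ is the case $m=1$. The one small difference is $2\Rightarrow 1$: the paper cites Furuya's lemma directly, while you deduce $\Der_k(R)=\IDer_k(R;p^e-1)$ from the chain and apply the converse direction of Theorem \ref{teo-Equivalencias}, which the paper proves by adapting Furuya's argument, so both routes rest on the same idea.
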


\begin{proof} It is clear that $3\Rightarrow 2$. The implication $2 \Rightarrow 1$ follows from Lemma \cite[\S 2 Lemma]{Fur81}. Finally, we prove $1\Rightarrow 3$. Let $D\in \HS_{k}(R;m)$ and select  $e\in {\mathbb N}$   so that $p^{e-1}\leq m<p^e$. Since $R$ has a $p^e$-basis  over $k$, by Theorem  \ref{teo-Equivalencias} it follows that  $\HS_{k}(R;m)=\IHS_{k}(R;m;p^e-1)$. Now, let 
	$D^{\{e\}}\in \HS_{k}(R;p^e-1)$ be a $(p^e-1)$-integral of $D$. Again, since $R$ has a $p^{e+1}$-basis over $k$,  it follows that  $\HS_{k}(R;p^e-1)=\IHS_{k}(R;p^e-1;p^{e+1}-1)$, and hence there is some $(p^{e+1}-1)$-integral    $D^{\{e+1\}}\in \HS_{k}(R;p^{e+1}-1)$ of  $D^{\{e\}}$. Iterating this process, for all  $\epsilon\geq e$, we can find  $D^{\{\epsilon\}}\in \HS_{k}(R;p^\epsilon-1)$ so that  $\tau_{p^\epsilon-1, p^{\epsilon-1}-1}(D^{\{\epsilon\}})=D^{\{\epsilon-1\}}$. The inverse limit of these derivations is an $\infty$-integral of $D$ and we conclude the proof. 
\end{proof}

\section{Integrability and linearly disjointness}\label{Sec-IntDis}

In this section, we present two results relating integrability and linearly disjointness, which allow us to partially answer the question in \cite[3.16 Remark]{FernandezNarvaez}. We recall that if $K$ is a field and $K\subset R, S \subset \Omega$ are two ring extensions, we say that $R$ and $S$ are {\em linearly disjoint} over $K$ if the canonical map $R\otimes_K S\to \Omega$ is inyective. The first of our result is a generalization of a theorem due to A. Tyc in the case of noetherian rings (\cite[Theorem 2]{Tyc}) and, in the second one we will use two well-known properties of field extensions that, for the convenience of the reader, we recall here:

\begin{lem}\cite[27.G Exercise 1]{MatsumuraViejo}\label{Lema-MacLane} Let 
	$k\subseteq K$ be a field extension of positive characteristic   $p$. Then  $K$ is  separable over $k$ if and only if  $K$ and $k^{1/p}$ are linearly disjoint over  $k$. 
\end{lem}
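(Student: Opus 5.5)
We argue directly from the definition of separability in the sense of \cite{MatsumuraViejo}: $K$ is separable over $k$ if $K\otimes_k L$ is reduced for every extension field $L$ of $k$. It is enough to test this on the finite purely inseparable extensions $L$, and each of these embeds in some $k^{1/p^e}$. The two implications are treated separately, and in both we compute inside a fixed algebraic closure $\Omega$ of $K$.

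\emph{Separable $\Rightarrow$ linearly disjoint.} Consider the canonical map $\mu: K\otimes_k k^{1/p}\to \Omega$ and take $x=\sum a_i\otimes b_i$ in its kernel. Since we are in characteristic $p$, we have
$$x^p=\sum a_i^p\otimes b_i^p=\Big(\sum a_i^p b_i^p\Big)\otimes 1 .$$
Here $b_i^p\in k$, so $x^p$ lies in $K\otimes 1\cong K$, and $\mu$ is injective on $K\otimes 1$. Because $\mu(x^p)=\mu(x)^p=0$, we get $x^p=0$. By separability $K\otimes_k k^{1/p}$ is reduced, so $x=0$. Hence $\mu$ is injective, which is exactly linear disjointness.

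\emph{Linearly disjoint $\Rightarrow$ separable.} Choose a $p$-basis $B$ of $k$ over $k^p$. Then the monomials in $B^{1/p}$ with exponents $<p$ form a $k$-basis of $k^{1/p}$. By linear disjointness they stay $K$-linearly independent in $\Omega$. Applying Frobenius, this says that the monomials in $B$ with exponents $<p$ are $K^p$-linearly independent, i.e.\ $B$ is $p$-independent in $K$ over $K^p$.

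The key step, which I expect to be the main obstacle, is to upgrade this to $p^e$-independence for every $e$. Since $K$ is a field, extend $B$ to a $p$-basis $B'$ of $K$ over $K^p$. A $p$-basis of a field over its $p$-th powers is automatically a $p^e$-basis; this is the remark after Definition \ref{pebase}, applied with $R=K$ and $k=K^p$, using that $K$ is reduced. Hence $B$, being a subset of $B'$, is $p^e$-independent over $K^{p^e}$. Taking $p^e$-th roots, the monomials in $B^{1/p^e}$ with exponents $<p^e$ are linearly independent over $K$.

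It remains to conclude. These monomials form a $k$-basis of $k^{1/p^e}=k[B^{1/p^e}]$, so $K\otimes_k k^{1/p^e}\to\Omega$ is injective. Its image $K[k^{1/p^e}]$ is a field, so $K\otimes_k k^{1/p^e}$ is reduced. Finally, let $L$ be any finite purely inseparable extension of $k$ and choose $e$ with $L\subseteq k^{1/p^e}$. Since $K$ is flat over the field $k$, $K\otimes_k L$ is a subring of $K\otimes_k k^{1/p^e}$, hence reduced. This proves that $K$ is separable over $k$.
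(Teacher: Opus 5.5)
Your proof is correct, but it is not what the paper does. The paper gives no proof of this lemma; it simply quotes the exercise from Matsumura. There the result follows at once from the textbook criterion, valid for any $k$-algebra, that separability is detected by reducedness of $K\otimes_k k^{1/p}$. That criterion is combined with exactly the observation in your first half: every element of $K\otimes_k k^{1/p}$ has its $p$-th power in $K\otimes 1$, so the kernel of $K\otimes_k k^{1/p}\to\Omega$ is the nilradical.

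You assume only the weaker reduction to finite purely inseparable extensions, and you prove the passage from $k^{1/p}$ to every $k^{1/p^e}$ yourself with $p$-bases:
\begin{itemize}
\item linear disjointness makes a $p$-basis $B$ of $k$ $p$-independent in $K$;
\item extending $B$ to a $p$-basis of $K$ makes it $p^e$-independent;
\item $p^e$-independence is linear disjointness from $k^{1/p^e}$.
\end{itemize}
This is the classical core of MacLane's criterion. It is essentially the field case of the mechanism behind Theorem~\ref{main_1}: after Frobenius, your hypothesis says $K^p$ and $k$ are linearly disjoint over $k^p$, and the conclusion is that $p$-bases upgrade to $p^e$-bases. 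So your route explains the lemma in the paper's own $p$-basis language, while the citation is quicker and more general.

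Your opening reduction is not a formality. It rests on two facts: a finitely generated extension becomes separably generated after a finite purely inseparable base change, and tensoring with a separably generated extension preserves reducedness. It should be cited, not just asserted.

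One step needs rewording. You cannot apply the remark after Definition~\ref{pebase} ``with $R=K$ and $k=K^p$''. In that definition's sense, a $p^e$-basis over $K^p$ would have to be independent over $K^{p^e}[K^p]=K^p$, which is impossible for $e\geq 2$. What you need is that $B'$ itself is a $p^e$-basis of $K$ over $K^{p^e}$, whereas the remark as stated only gives existence of some $p^e$-basis.

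For a field this is immediate. Frobenius makes $(B')^{p^t}$ a $p$-basis of $K^{p^t}$ over $K^{p^{t+1}}$. Writing exponents below $p^e$ in base $p$ then identifies the products of these bases along $K\supset K^p\supset\cdots\supset K^{p^e}$ with the monomials in $B'$ of exponents below $p^e$. The same fact applied to $k$ is what justifies your claim that the monomials in $B^{1/p^e}$ form a $k$-basis of $k^{1/p^e}$.
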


\begin{lem}\cite[\S2 Corollary]{Fur81}\label{Lem-CorolarioSeparebleCuerpos} Let $K$ be a finitely generated field over a field $k$. Then the following are equivalent: 
	\begin{enumerate}
		\item[1.] $K$ is separable over $k$. 
		\item[2.] $\Der_k(K)=\IDer_k(K)$.
	\end{enumerate}
\end{lem}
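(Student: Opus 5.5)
We prove the two implications separately. For $1\Rightarrow 2$ we use smoothness. For $2\Rightarrow 1$ we use the $p^e$-basis machinery of Section \ref{Sec-MainTheorem} together with a degree count.

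\emph{Separable implies $\Der_k(K)=\IDer_k(K)$.} Since $K$ is finitely generated and separable over $k$, it has a separating transcendence basis $x_1,\ldots,x_d$. Then $K$ is a finite separable, hence étale, extension of the purely transcendental field $k(x_1,\ldots,x_d)$. A localization of a polynomial ring followed by an étale extension is $0$-smooth over $k$. Theorem \ref{Teo-Smooth} then shows that every HS-derivation of length $1$ is $\infty$-integrable, so $\Der_k(K)=\IDer_k(K)$.

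If one prefers not to quote $0$-smoothness, there is a direct route. Integrate a derivation $\sum a_i\partial_{x_i}$ on $k(x)$ by composing the Taylor-type HS-derivations of the $\partial_{x_i}$, which are $\infty$-integrable. Then extend the resulting ring map $k(x)\to K\lbr t\rbr$ uniquely to $K$ by Hensel's lemma, since the minimal polynomial of a primitive element is separable.

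\emph{$\Der_k(K)=\IDer_k(K)$ implies separable.} Since $K$ is a field, $K$ always has a $p$-basis $B$ over $k$, that is, over $K^p[k]=K^pk$. Because $K$ is finitely generated over $k$, $[K:K^pk]$ is finite and $B$ is finite, say $|B|=n$. Moreover $n=\dim_K\Der_k(K)$, via the dual derivations $\partial/\partial b$ of \cite[Proposition 1.]{Fur81}. By Corollary \ref{Cor-InfinitoIntegrabilidad}, the hypothesis gives that $K$ has a $p^e$-basis over $k$ for every $e$, and $B$ itself works, as in the proof of Theorem \ref{teo-Equivalencias}. Hence $B_e$ is a basis of $K$ over $K^{p^e}k$, so that
$$
[K:K^{p^e}k]=p^{en}\quad\text{for all } e\geq 1 .
$$

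Now let $x_1,\ldots,x_d$ be any transcendence basis of $K/k$, with $d=\operatorname{tr.deg}_kK$. Since $k(x_1^{p^e},\ldots,x_d^{p^e})\subseteq K^{p^e}k$, we obtain
$$
p^{en}\leq [K:k(x^{p^e})]=[K:k(x)]\,[k(x):k(x^{p^e})]=[K:k(x)]\,p^{ed}.
$$
The factor $[K:k(x)]$ is finite and independent of $e$, so letting $e\to\infty$ forces $n\leq d$. On the other hand, $\dim_K\Der_k(K)\geq d$ always holds, with equality if and only if $K/k$ is separably generated. For finitely generated extensions this is the same as separable, which is the classical Jacobian criterion. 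Hence $n=d$ and $K$ is separable over $k$.

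The main obstacle is the second implication. The point needing care is that Corollary \ref{Cor-InfinitoIntegrabilidad} yields a $p^e$-basis \emph{of the same cardinality} $n$ for every $e$. That uniform cardinality is what produces the exact growth $p^{en}$ and makes the comparison with $p^{ed}$ work.

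Alternatively, one could phrase the conclusion through Lemma \ref{Lema-MacLane}, showing that $K^p$ and $k$ are linearly disjoint over $k^p$. The degree count above seems the most direct route.
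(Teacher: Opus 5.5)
Your proof is correct. The paper gives no proof of this lemma; it is imported from \cite[\S2 Corollary]{Fur81}. The only part of Furuya's method visible in the text is his \S2 Lemma, reproduced as the implication $3\Rightarrow 2$ of Theorem \ref{teo-Equivalencias} (labelled ``$2\Rightarrow 3$'' there), and you use it too. What you add is a self-contained passage from ``the $p$-basis $B$ is a $p^e$-basis for every $e$'' to separability. It goes through the estimate $p^{en}\le[K:k(x)]\,p^{ed}$ and the differential criterion ($\dim_K\Omega_{K/k}\ge\operatorname{tr.deg}_kK$, with equality iff $K/k$ is separably generated). There is no circularity, because the paper uses the lemma only later, in Theorem \ref{reciproco}.

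Two simplifications are available. First, the uniform cardinality you single out as the delicate point is automatic for fields. Any $p^e$-basis is in particular a $p$-basis (the case $t=0$ of the characterization recalled after Definition \ref{pebase}), and every $p$-basis of $K$ over $K^pk$ has $\log_p[K:K^pk]$ elements. Second, the degree count can be replaced by a direct argument. A nonzero $f\in k[T_b \mid b\in B]$ with $f(B)=0$ would, after taking $p^e>\deg f$, give a nontrivial $k$-linear relation among elements of $B_e$. Hence $B$ is algebraically independent over $k$. Since $K=K^pk(B)$ forces $\Omega_{K/k(B)}=0$, the same criterion shows $B$ is a separating transcendence basis.

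For $1\Rightarrow 2$, your $0$-smoothness argument via Theorem \ref{Teo-Smooth} is the standard one. In the Hensel variant, define $k(x)\to K\lbr t\rbr$ by $x_i\mapsto x_i+a_it$, since $a_i=\delta(x_i)$ lies in $K$, not in $k(x)$. Inside the paper one could instead transport Lemma \ref{Lema-MacLane} by Frobenius to linear disjointness of $K^p$ and $k$ over $k^p$, and then apply Theorem \ref{main_1}.

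Finally, the statement does not fix the characteristic. You should say explicitly that in characteristic $0$ both conditions always hold, so the $p$-basis argument is only needed when $p>0$.
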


\begin{teo} \label{main_1}
	Let $R$ be a reduced ring of prime characteristic $p>0$ that has a $p$-basis $B$  over a subfield $k$.  Then, if $R^p$ and $k$ are linearly disjoint over $k^{p}$, $B$ is a  $p^e$-basis of $R$ over $k$ for all $e\geq 1$ and therefore $\Der_{k}(R)=\IDer_{k}(R)$.
\end{teo}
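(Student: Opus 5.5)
The plan is to show, by induction on $e$, that $B$ is a $p^e$-basis of $R$ over $k$ for every $e\geq 1$. Once this is done, Corollary \ref{Cor-InfinitoIntegrabilidad} ($1\Rightarrow 2$) gives $\Der_k(R)=\IDer_k(R)$. By the remark after Definition \ref{pebase}, $B$ is a $p^{e}$-basis if and only if $B^{p^t}$ is a $p$-basis of $R^{p^t}[k]$ over $R^{p^{t+1}}[k]$ for all $0\leq t<e$. So it suffices to prove, for every $t\geq 0$, that $B^{p^t}$ is a $p$-basis of $R^{p^t}[k]$ over $R^{p^{t+1}}[k]$. Generation is automatic: raising $R=R^p[k][B]$ to the $p^t$-th power gives $R^{p^t}=R^{p^{t+1}}[k^{p^t}][B^{p^t}]$, and adjoining $k$ yields $R^{p^t}[k]=R^{p^{t+1}}[k][B^{p^t}]$. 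The whole content is therefore the $p$-independence of $B^{p^t}$ over $R^{p^{t+1}}[k]$.

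For independence, use that $R$ is reduced, so the Frobenius $F^t:R\to R^{p^t}$ is a ring isomorphism. It carries the $p$-independence of $B$ over $R^p[k]$ to the $p$-independence of $B^{p^t}$ over $R^{p^{t+1}}[k^{p^t}]$. So the monomials $B^{p^t}_1=\{(b^{p^t})^{\alpha}: 0\le\alpha_i<p\}$ form a basis of $R^{p^t}[k^{p^t}]$ as a free module over $R^{p^{t+1}}[k^{p^t}]$. It remains to pass from the coefficient ring $R^{p^{t+1}}[k^{p^t}]$ to the larger ring $R^{p^{t+1}}[k]$. Base change shows $R^{p^t}[k^{p^t}]\otimes_{k^{p^t}}k$ is free over $R^{p^{t+1}}[k^{p^t}]\otimes_{k^{p^t}}k$ on the same monomials. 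The required statement then follows once the natural multiplication maps
\[
R^{p^t}[k^{p^t}]\otimes_{k^{p^t}} k\longrightarrow R^{p^t}[k],\qquad R^{p^{t+1}}[k^{p^t}]\otimes_{k^{p^t}}k\longrightarrow R^{p^{t+1}}[k]
\]
are isomorphisms. Equivalently, $R^{p^t}$ and $k$ must be linearly disjoint over $k^{p^t}$, and $R^{p^{t+1}}$ and $k$ must be linearly disjoint over $k^{p^t}$. Both maps are surjective by construction, so only injectivity matters.

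The main obstacle is deriving these linear disjointness statements for all $t$ from the single hypothesis that $R^p$ and $k$ are linearly disjoint over $k^p$. First, apply the Frobenius isomorphism to the hypothesis: $R^{p^{j+1}}$ and $k^{p^j}$ are linearly disjoint over $k^{p^{j+1}}$ for every $j$. Next, use the transitivity (tower) property of linear disjointness along the chain $k^{p^{t}}\subset k^{p^{t-1}}\subset\cdots\subset k$. To build the tower, one needs that $R^{p^{j}}[k^{p^{j}}]$ and $k^{p^{j-1}}$ are linearly disjoint over $k^{p^{j}}$. For this, combine the hypothesis transported by Frobenius with the free-basis description $R^{p^{j-1}}[k^{p^{j-1}}]$ over $R^{p^{j}}[k^{p^{j-1}}]$; this handles the step from $R^{p^{j}}$ to $R^{p^j}[k^{p^j}]$. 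The tower lemma for linear disjointness needs $k$ to be a field, which is why $k$ is assumed to be a subfield. To make the bookkeeping lighter, first try the direct route: choose a $p$-basis of $k$ over $k^p$, so that $k$ is free over $k^{p^t}$ on $p^t$-monomials. Then check injectivity by expanding elements coefficient by coefficient, peeling off one Frobenius level at a time and using the hypothesis at each level. With the disjointness in hand, the induction closes and the theorem follows.
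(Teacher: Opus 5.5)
Your proposal is correct, but it is organized differently from the paper's proof. You split the problem into Frobenius levels, using the characterization of a $p^e$-basis by $p$-bases of $R^{p^t}[k]$ over $R^{p^{t+1}}[k]$. You transport $B$ to level $t$ by $F^t$, and then extend scalars along $k^{p^t}\subset k$. The only extra input this needs is that $R^{p^t}$ and $k$ are linearly disjoint over $k^{p^t}$ for every $t$.

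The paper never isolates that statement. It fixes a $p$-basis $A$ of $k/k^p$ and proves by induction Property *: $R$ is free over $R^{p^e}[k^{p^{e-1}}]$ on the monomials $B_eA_{e-1}$. Each step uses only the Frobenius-transported disjointness of $R^{p^e}$ and $k^{p^{e-1}}$ over $k^{p^e}$, together with transitivity of free bases. At the end the paper regroups, using $R^{p^{e+1}}[A_e]=R^{p^{e+1}}[k]$, to read off $B_{e+1}$ as a basis over $R^{p^{e+1}}[k]$.

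What each route buys:
\begin{itemize}
\item The paper's route is pure monomial bookkeeping and needs no tower lemma, at the cost of choosing $A$. Your suggested ``direct route'' is essentially this.
\item Your route is coordinate-free on the $k$ side. It yields a clean intermediate fact that uses only reducedness and that $k$ is a field, not the $p$-basis.
\end{itemize}

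The step you call the main obstacle is in fact easy, and your write-up of it is muddled. Since $k\subset R$, one has $R^{p^j}[k^{p^j}]=R^{p^j}$. So there is no ``step from $R^{p^j}$ to $R^{p^j}[k^{p^j}]$'', and no need for the free-basis description. A two-line induction on $t$ suffices:
\[
R^{p^t}\otimes_{k^{p^t}}k\cong\bigl(R^{p^t}\otimes_{k^{p^t}}k^{p^{t-1}}\bigr)\otimes_{k^{p^{t-1}}}k\hookrightarrow R^{p^{t-1}}\otimes_{k^{p^{t-1}}}k\hookrightarrow R .
\]
The first arrow is injective because the hypothesis, transported by $F^{t-1}$, gives injectivity of $R^{p^t}\otimes_{k^{p^t}}k^{p^{t-1}}\to R^{p^{t-1}}$, and $k$ is flat over the field $k^{p^{t-1}}$. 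The second arrow is injective by induction.

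Two smaller points:
\begin{itemize}
\item ``$R^{p^{t+1}}$ and $k$ linearly disjoint over $k^{p^t}$'' is ill-posed, since $k^{p^t}\not\subset R^{p^{t+1}}$ in general; you mean $R^{p^{t+1}}[k^{p^t}]$.
\item Only injectivity of the first multiplication map is needed. Surjectivity of the second map lets you lift the coefficients of a relation, and freeness after base change then kills them.
\end{itemize}
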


\begin{proof}
	First of all it can be checked, using an inductive argument, that because $R$ is reduced and $R^p$ and $k$ are linearly disjoint over $k^{{p}}$, then $R^{p^e}$ and $k^{p^{e-1}}$ are linearly disjoint over $k^{p^e}$ for all $e\in {\mathbb N}_{\geq 2}$. Next,   we will prove that for $e\geq 1$, $B$  is a $p^e$-basis of $R$ over $k$.     To do so we will first show, using induction, that the following property holds for all $e\geq 1$: 
	
	\medskip
	
	{\bf Property *.} {\em Let $R$, $k$ and  $B$ be as in the theorem, and let   $A$ be a $p$-basis of  $k/k^p$. Then, for all  $e\geq 1$,  $R$ is free over  $R^{p^e}[k^{p^{e-1}}]$ with basis $B_eA_{e-1}$ (see Definition \ref{pebase}).}
	
	\medskip
	
	\noindent The base case, $e=1$ is inmediate since by assumption,   $R$ is free over $R^p[k]$ with $p$-basis $B$.   Suppose the property holds for some $e\geq 1$.  Since $R^{p^e}[k^{p^{e-1}}]\simeq R^{p^e}\otimes_{k^{p^e}}k^{p^{e-1}}$, $R$ is free over $R^{p^e}$ with basis $B_eA_e$. Now, $R^{p^e}$ is free over $R^{p^{e+1}}[k^{p^e}]$ with $p$-basis $B^{p^e}=\{b^{p^e} \ | \ b\in B\}$, thus $R$ is free over $R^{p^{e+1}}[k^p]$ with basis $B_{e+1}A_e$. 
	
	\medskip
	
	To conclude, using that the property * holds,  we have that  $R$ is free over $R^{p^{e+1}}[A_e]$ with basis $B_{e+1}$, but 
	$R^{p^{e+1}}[A_e]=R^{p^{e+1}}[k]$.

	%
	%
	%
		%
		%
		%
		%
		%
		%
		%
\end{proof}

\begin{obs}\label{Nota-LinealmenteDisjuntos} The previous result is not true if we remove the hypothesis on the existence of $p$-basis. For example, let $k$ be a perfect field of characteristic $2$, and $R=k[x,y]/\langle x^2-y^3\rangle$. In this case, $k$ and $R^p$ are linearly disjoint over $k^p=k$, however $\IDer_k(R)\neq \Der_k(R)$ (see \cite[Proposition 4.1]{TiBinomial}). 
\end{obs}

\begin{teo}\label{reciproco}
	Let $R$ be an integral domain and let  $k\subset R$ be a field of positive characteristic $p$. Suppose that $R^p$ and $k$ are not linearly disjoint over $k^p$, that $\Omega_{R/k}$ is of finite presentation over $R$, and that $Q(R)$, the quotient field of $R$, is a finitely generated extension field of ${k}$.   \ Then $\IDer_{k}(R) \subsetneq \Der_{k}(R)$.
\end{teo}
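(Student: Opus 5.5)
We pass to the fraction field $K=Q(R)$, where Lemmas \ref{Lema-MacLane} and \ref{Lem-CorolarioSeparebleCuerpos} apply. The plan is to show that $K$ is not separable over $k$, to produce a derivation of $K$ over $k$ that is not $\infty$-integrable, and to clear denominators so that it lives on $R$.

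\emph{Step 1: $K$ is not separable over $k$.} Non-linear disjointness of $R^p$ and $k$ over $k^p$ gives elements $\lambda_1,\dots,\lambda_n\in k$, linearly independent over $k^p$, and elements $r_1,\dots,r_n\in R$, not all zero, with $\sum r_i^p\lambda_i=0$. Since $R\subset K$, the same relation shows that $K^p$ and $k$ are not linearly disjoint over $k^p$. Applying the Frobenius isomorphism $x\mapsto x^{1/p}$, inside an algebraic closure, $K$ and $k^{1/p}$ are not linearly disjoint over $k$. By Lemma \ref{Lema-MacLane}, $K$ is not separable over $k$.

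\emph{Step 2: a non-integrable derivation of $K$.} Since $K$ is finitely generated over $k$ and not separable, Lemma \ref{Lem-CorolarioSeparebleCuerpos} gives $\delta\in\Der_k(K)\setminus\IDer_k(K)$.

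\emph{Step 3: descend to $R$.} Localization behaves well here. Because $\Omega_{R/k}$ is finitely presented,
\[
\Der_k(R)\otimes_R K=\Hom_R(\Omega_{R/k},R)\otimes_R K\cong\Hom_K(\Omega_{K/k},K)=\Der_k(K).
\]
Hence there is a nonzero $c\in R$ with $c\delta(R)\subset R$, so $\delta':=c\delta|_R\in\Der_k(R)$. Suppose, for contradiction, that $\IDer_k(R)=\Der_k(R)$. Then $\delta'$ has an $\infty$-integral $D\in\HS_k(R)$. A HS-derivation extends uniquely to localizations, via the homomorphism $\varphi_D:R\to R\lbr t\rbr$ composed into $K\lbr t\rbr$, which sends nonzero elements to units since their constant term is a unit in $K$. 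So $D$ extends to $\widetilde D\in\HS_k(K)$ with $\widetilde D_1=c\delta$. Thus $c\delta\in\IDer_k(K)$, and since $\IDer_k(K)$ is a $K$-submodule, $\delta=c^{-1}(c\delta)\in\IDer_k(K)$. This contradicts Step 2, so $\IDer_k(R)\subsetneq\Der_k(R)$.

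\emph{Main obstacle.} The step needing most care is Step 3. One must check that the extension of an HS-derivation to $K$ is well defined and $k$-linear. One must also check that every element of $\Der_k(K)$ is a $K$-multiple of a restriction of an element of $\Der_k(R)$; this is exactly where finite presentation of $\Omega_{R/k}$ is used, since $\Hom$ commutes with flat base change only for finitely presented modules. Step 1 is a routine transfer of the linear dependence relation through Frobenius, and Step 2 is a direct citation.
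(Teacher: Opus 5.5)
Your proposal is correct and follows essentially the same route as the paper: carry the failure of linear disjointness over to $Q(R)$, deduce non-separability via Lemma \ref{Lema-MacLane}, apply Lemma \ref{Lem-CorolarioSeparebleCuerpos}, and use the finite-presentation isomorphism $\Der_k(R)\otimes_R Q(R)\cong\Der_k(Q(R))$ to pass between $R$ and $Q(R)$. The only difference is that your Step 3 proves by hand, by clearing denominators and extending $\varphi_D$ to $Q(R)$, the localization of integrable derivations that the paper simply cites from \cite[1.2.10]{Na2}.
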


\begin{proof} Suppose, to get to a contradiction, that $\text{IDer}_{k}(R)= \text{Der}_{k}(R)$.   Since  
	$$\Hom_R(\Omega_{R/{k}}, R)\otimes_RQ(R)\simeq  \Hom_{Q(R)}(\Omega_{Q(R)/{k}}, Q(R)),$$ if $\IDer_{k}(R)= \Der_{k}(R)$, then $\IDer_{k}(Q(R))= \Der_{k}(Q(R))$ (see \cite[1.2.10]{Na2}). Since $R^p$ and $k$ are not linearly disjoint over ${k}^p$, then 
	$Q(R)^p$ and ${k}$ are not linearly disjoint over ${k}^p$, hence $Q(R)$ is not separable over ${k}$ (see Lemma  \ref{Lema-MacLane}).  By Lemma \ref{Lem-CorolarioSeparebleCuerpos}, we obtain a contradiction, which completes the proof.
\end{proof}

As an immediate consequence of the previous results, we obtain the following corollary. 
\begin{cor}
	Let $R$ be an integral domain and let $k\subset R$ be a field of positive characteristic $p$. Suppose that $R$ has a $p$-basis over $k$, that $\Omega_{R/k}$ is of finite presentation over $R$, and that $Q(R)$ is a finitely generated extension field of $k$. Then the following properties are equivalent: 
\begin{enumerate}
	\item[1.] $R^p$ and $k$ are linearly disjoint over $k^p$. 
	\item[2.] $\Der_k(R)=\IDer_k(R)$. 
\end{enumerate}
\end{cor}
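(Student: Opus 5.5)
The two implications come from Theorem \ref{main_1} and Theorem \ref{reciproco}; the only thing to check is that the hypotheses of the corollary supply what each theorem needs.

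For $1\Rightarrow 2$ we apply Theorem \ref{main_1}, which needs $R$ reduced, a $p$-basis of $R$ over the subfield $k$, and $R^p$, $k$ linearly disjoint over $k^p$.
\begin{itemize}
\item $R$ is an integral domain, so it is reduced.
\item The $p$-basis over $k$ is part of the hypotheses.
\item Condition 1 is the linear disjointness.
\end{itemize}
Theorem \ref{main_1} then says $B$ is a $p^e$-basis for every $e$, and $\Der_k(R)=\IDer_k(R)$. Equivalently, Corollary \ref{Cor-InfinitoIntegrabilidad} gives the same equality once we know $R$ has a $p^e$-basis over $k$ for all $e$. The finite presentation and finite generation assumptions are not used in this direction.

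For $2\Rightarrow 1$ we argue by contraposition. Suppose $R^p$ and $k$ are not linearly disjoint over $k^p$. The corollary assumes $R$ is a domain, $k\subset R$ is a field of characteristic $p$, $\Omega_{R/k}$ is finitely presented, and $Q(R)$ is finitely generated over $k$. These are exactly the hypotheses of Theorem \ref{reciproco}, which then gives $\IDer_k(R)\subsetneq\Der_k(R)$. This contradicts condition 2, so condition 1 must hold.

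There is no real obstacle. The only points needing care are that an integral domain is reduced, and that the $p$-basis hypothesis is used only in the first direction. The proof should note in one sentence that the hypotheses transfer verbatim to each theorem.
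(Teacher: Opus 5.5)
Your proposal is correct and matches the paper, which presents this corollary as an immediate consequence of the preceding results. As you do, it gets $1\Rightarrow 2$ from Theorem \ref{main_1} (a domain is reduced) and $2\Rightarrow 1$ by contraposition from Theorem \ref{reciproco}, whose hypotheses are exactly those of the corollary minus the $p$-basis assumption.
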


The following result provides an affirmative answer to the question in \cite[3.16 Remark]{FernandezNarvaez} in the case where $R$ has a $p$-basis over a quasi-coefficient field $k$. Note that this corollary can be proved using \cite[Proposition 5.1]{NiitsumaTRU}  and Theorem \ref{Teo-Smooth}. Nevertheless, we give an alternative proof based on linearly disjointness.

\begin{cor}\label{corolario_japones}
	Let $(R,{\mathfrak m}, K)$ be a regular local ring containing a quasi-coefficient field $k$. Suppose that $R$ has a $p$-basis over $k$. Then $\Der_{k}(R)=\IDer_{k}(R)$. 
\end{cor}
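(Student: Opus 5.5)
The plan is to deduce the corollary from Theorem \ref{main_1}. A regular local ring is a domain, so $R$ is reduced, and by hypothesis it has a $p$-basis over the subfield $k$. Theorem \ref{main_1} then gives $\Der_k(R)=\IDer_k(R)$ as soon as we know that $R^p$ and $k$ are linearly disjoint over $k^p$. The whole proof therefore reduces to establishing this linear disjointness for a regular local ring $(R,\mathfrak m,K)$ with a quasi-coefficient field $k$. Recall that this means $k\subset R$ is a subfield and $K$ is $0$-étale (formally étale) over the image of $k$.

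To prove the linear disjointness I will pass to the completion $\widehat R$. Since $K/k$ is $0$-étale, Cohen's structure theory gives a coefficient field $K'\supseteq k$ of $\widehat R$. Regularity then gives $\widehat R\simeq K'\lbr x_1,\ldots,x_n\rbr$, where $x_1,\ldots,x_n$ is a regular system of parameters. Moreover, $K'$ is formally étale over $k$, hence separable over $k$. By Lemma \ref{Lema-MacLane}, $K'$ and $k^{1/p}$ are linearly disjoint over $k$. Raising to the $p$-th power (Frobenius is injective), $K'^p$ and $k$ are linearly disjoint over $k^p$.

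Next I will check that $\widehat R^p=K'^p\lbr x_1^p,\ldots,x_n^p\rbr$ and $k$ are linearly disjoint over $k^p$. Take elements $\lambda_1,\ldots,\lambda_r\in k$ that are linearly independent over $k^p$, and suppose $\sum f_j\lambda_j=0$ with $f_j\in \widehat R^p$. Comparing coefficients of each monomial $x^{p\alpha}$ gives relations $\sum c_{j,\alpha}\lambda_j=0$ with $c_{j,\alpha}\in K'^p$. By the previous step these force $c_{j,\alpha}=0$, so all $f_j=0$. Finally, $R\to \widehat R$ is injective because $R$ is Noetherian local. The injection $R^p\otimes_{k^p}k\to \widehat R^p\otimes_{k^p}k$ holds because $k$ is flat over the field $k^p$, and the latter maps injectively into $\widehat R$. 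Hence $R^p\otimes_{k^p}k\to R$ is injective, and Theorem \ref{main_1} applies.

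The main obstacle is the first step of the disjointness argument: producing a coefficient field of $\widehat R$ that contains $k$ and is separable over $k$. This is the standard fact that a $0$-étale residue extension lifts (via $0$-smoothness of $K$ over $k$ and completeness of $\widehat R$). I will cite Matsumura's theorem on the existence of coefficient fields containing a quasi-coefficient field. I will also use that $0$-étale field extensions are separable, which is needed to invoke Lemma \ref{Lema-MacLane}. The remaining steps are routine.
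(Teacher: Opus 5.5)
Your proof is correct. It reaches the same reduction as the paper (linear disjointness of $R^p$ and $k$ over $k^p$, then Theorem \ref{main_1}), but it establishes that disjointness by a different route.

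The paper never leaves $R$ and its fraction field. It takes a $p$-basis $A$ of $k/k^p$ and uses that $K/k$ is formally \'etale to see that $\overline A$ is still a $p$-basis of $K/K^p$. It then invokes the Kimura--Niitsuma lemma (Lemma \ref{lema_2_5_KimNit}, applied to $R^p\subset R$) to lift this $p$-independence from the residue field to $Q(R)$ over $Q(R)^p$. Since the monomials in $A$ form a $k^p$-basis of $k$, this is exactly the linear disjointness.

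You instead pass to the completion and pick a coefficient field $K'\supseteq k$, so that $\widehat R\simeq K'\lbr x_1,\ldots,x_n\rbr$. Via Mac Lane's criterion (Lemma \ref{Lema-MacLane}) and Frobenius, you reduce to the linear disjointness of $K'^p$ and $k$ over $k^p$. You then compare coefficients of the monomials $x^{p\alpha}$ and descend along $R\hookrightarrow\widehat R$.

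Comparing the two: the paper's argument is shorter but rests on a specialized external lemma. Yours uses only textbook facts: Cohen's structure theorem, and that $0$-smooth field extensions are separable. It also shows transparently where regularity enters, namely the power-series description of $\widehat R$, which genuinely fails for non-regular domains such as $k[x,y]/\langle x^p-ay^p\rangle$ with $a\notin k^p$. It proves the slightly stronger statement that $\widehat R^p$ and $k$ are linearly disjoint over $k^p$. Finally, it only needs $K$ to be $0$-smooth (separable) over $k$, not $0$-\'etale.

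The one point worth stating explicitly is that the isomorphism $K'\lbr X_1,\ldots,X_n\rbr\to\widehat R$ is the identity on $K'$. This makes the $\lambda_j\in k\subset K'$ constants, so the coefficient comparison is legitimate, and the Cohen structure theorem gives exactly this.
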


\begin{proof}
	It suffices to check that  $R^p$ and $k$ are linearly disjoint over $k^p$. To this end we will  show that there is a basis $B$ of $k/k^p$  that is linearly independent over $R^p$. Let $A$ be a $p$-basis of $k/k^p$. Then,  since   $K/k$ is formally \'etale,   $A$ is also a $p$-basis of ${K}/{K}^p$. To conclude, by Lemma \ref{lema_2_5_KimNit} below, $A$ is a set of independent elements over $R^p$. 
\end{proof}

\begin{lem}\cite[Lemma 2.5]{KimNit_Japan} \label{lema_2_5_KimNit}
	Let $(R,{\mathfrak m},  {K})$ be a local domain ring with quotient field $Q(R)$. Let $R^p\subset R'\subset R$ be an intermediate regular local ring, $(R', {\mathfrak m}', K')$. Let $A$ be a subset of $R$ such that $\overline{A}$ is $p$-independent over $Q(R')$. Then $A$ is $p$-independent over $Q(R')$.  
	
\end{lem}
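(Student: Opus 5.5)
We argue by contradiction, reducing to a nontrivial relation with coefficients in $R'$ and then passing to leading forms. The step that needs real input is the last one: we use Frobenius flatness (Kunz) to make the leading-form argument work. We read the hypothesis as: $\overline A$ is $p$-independent over $K'$ inside $K$. This is the reading under which the lemma is used in the proof of Corollary \ref{corolario_japones}.

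Since $p$-independence is a condition on finitely many monomials at a time, we may assume $A=\{a_1,\ldots,a_r\}$ is finite. Suppose there is a nontrivial relation $\sum_{\alpha} c_\alpha a^\alpha=0$, where $\alpha$ runs over the multi-indices with $0\le \alpha_i<p$ and $c_\alpha\in Q(R')$. Clearing denominators, we may take every $c_\alpha\in R'$, with not all of them zero. Let $n=\min_\alpha \operatorname{ord}_{\mathfrak m'}(c_\alpha)$, which is finite because $R'$ is a Noetherian local domain and so Krull's intersection theorem applies.

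The natural plan is to take leading forms in $\mathfrak m'^nR/\mathfrak m'^{n+1}R$ and read off a relation among the $\overline a^\alpha$ over $K'$. The obstacle is that $R$ need not be flat over $R'$, so leading forms can be lost in passing to $R$. To fix this, embed everything in $S:=R'^{1/p}$, taken inside an algebraic closure of $Q(R)$. Since $R^p\subset R'$, we have $R\subset S$. Moreover $S$ is a regular local ring, isomorphic to $R'$ via Frobenius, with maximal ideal $\mathfrak m'^{1/p}$ and residue field $K'^{1/p}\supseteq K$. By Kunz's theorem, $S$ is flat over $R'$ because $R'$ is regular.

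Flatness gives
\[
\mathfrak m'^nS/\mathfrak m'^{n+1}S\simeq (\mathfrak m'^n/\mathfrak m'^{n+1})\otimes_{K'} S/\mathfrak m'S .
\]
Now fix a $K'$-basis $\{u_\beta\}$ of $\mathfrak m'^n/\mathfrak m'^{n+1}$ (for example, monomials in a regular system of parameters). Write the initial forms as $\operatorname{in}(c_\alpha)=\sum_\beta\lambda_{\alpha\beta}u_\beta$ with $\lambda_{\alpha\beta}\in K'$. Reducing the relation modulo $\mathfrak m'^{n+1}S$ then gives
\[
\sum_\alpha\lambda_{\alpha\beta}\,a^\alpha\in\mathfrak m'S \quad\text{for every }\beta .
\]
Some $\lambda_{\alpha\beta}$ is nonzero, because some $c_\alpha$ has order exactly $n$.

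Finally, push this through $S/\mathfrak m'S\to S/\mathfrak m_S=K'^{1/p}$; here $\mathfrak m_S\cap R=\mathfrak m$, since $S$ is integral over $R$. This yields $\sum_\alpha\lambda_{\alpha\beta}\overline a^{\,\alpha}=0$ in $K'^{1/p}$, a nontrivial $K'$-linear relation among the monomials $\overline a^{\,\alpha}$ with exponents less than $p$. This contradicts the $p$-independence of $\overline A$ over $K'$, so $A$ is $p$-independent over $Q(R')$.

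The only delicate step is the flatness identification. We expect to check mainly that $\operatorname{gr}$ commutes with the flat base change $R'\to S$, and that $\mathfrak m'S$ is contained in $\mathfrak m_S$ so the last reduction is legitimate.
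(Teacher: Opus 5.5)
The paper gives no proof of this lemma; it is quoted from Kimura--Niitsuma and used only in Corollary~\ref{corolario_japones}, so there is no internal argument to compare yours with. Your proof is correct and complete.

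Your reading of the hypothesis is the right repair. A subset of $K$ cannot be $p$-independent over $Q(R')$, so the condition must be that $\overline A$ is $p$-independent over $K'$ inside $K$. This matches the application with $R'=R^p$ and $K'=K^p$. Note that $K'\subseteq K$ because $R$ is integral over the local ring $R'$, which forces $\mathfrak m'=\mathfrak m\cap R'$.

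The two points you flag are routine:
\begin{itemize}
\item Tensoring $0\to\mathfrak m'^{n+1}\to\mathfrak m'^{n}\to\mathfrak m'^n/\mathfrak m'^{n+1}\to 0$ with the flat $R'$-module $S$ gives your displayed isomorphism. This is exactly what forces each $\sum_\alpha\lambda_{\alpha\beta}a^\alpha$ into $\mathfrak m'S$.
\item The inclusion $\mathfrak m'S\subseteq\mathfrak m_S$ is trivial. The equality $\mathfrak m_S\cap R=\mathfrak m$ can also be seen directly: for $x\in R$, $x\in\mathfrak m_S$ iff $x^p\in\mathfrak m'=\mathfrak m\cap R'$ iff $x\in\mathfrak m$.
\end{itemize}

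For comparison, there is a route that avoids Kunz's theorem and uses regularity only through the fact that regular local rings are domains.
\begin{itemize}
\item For $A=\{a_1,\dots,a_r\}$, let $B=R'[X_1,\dots,X_r]/(X_i^p-a_i^p)$. This is a free $R'$-module of rank $p^r$.
\item The fibre $B/\mathfrak m'B$ has $K'$-dimension $p^r$. It maps onto $K'[\overline a_1,\dots,\overline a_r]\subseteq K$, which has the same dimension by hypothesis. So $B/\mathfrak m'B$ is a field.
\item Every maximal ideal of $B$ lies over $\mathfrak m'$, so $B$ is local with maximal ideal $\mathfrak m'B$. This ideal is generated by $\dim R'=\dim B$ elements, so $B$ is regular, hence a domain.
\item The natural surjection $B\to R'[A]$ lands in a ring of the same dimension, since both are integral extensions of $R'$. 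A nonzero ideal of a Noetherian local domain strictly lowers the dimension, so the kernel is zero.
\end{itemize}
Hence the monomials $a^\alpha$ form an $R'$-basis of $R'[A]$, which gives $p$-independence over $Q(R')$ after clearing denominators. As a bonus, $R'[A]$ is regular local.

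Your flatness argument reaches the same conclusion with heavier machinery. It does pinpoint exactly where a relation could be lost (passing to leading forms) and why regularity of $R'$ prevents it.
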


The following example illustrates that there are regular local rings containing a quasi-coefficient field $k$ that do not have a $p$-basis over $k$. However, we still do not know if there exist regular local rings containing a quasi-coefficient field $k$ such that $\Der_k(R)\neq \IDer_{k}(R)$.

\begin{ej}\label{ejemplo_2}
	In 	\cite[Section 4.1]{D-S} it is given an example of a non-excellent local regular ring. To facilitate the reading of this paper, we briefly describe the construction of such ring. Let $k$ be an algebraic closure of    $\FF_p$ and let $q(t)\in k\lbr t\rbr$ be a non-unit that is not  algebraic over $k(t)$. With these assumptions the following ring homomorphism is injective: 
	$$
	\begin{array}{ccc}
		k[x,y]&\hookrightarrow&k\lbr t\rbr\\
		x&\mapsto &t\\
		y&\mapsto & q(t),
	\end{array}
	$$
	that permits to see an isomorphic copy of $k(x,y)$ in $k((t))$.  The $t$-adic valuation in $k((t))$ gives as a discrete valuation ring $V_q=k\lbr t\rbr \cap k(x,y)$, with maximal ideal  $\langle x\rangle$. Observe that the residue field of $V_q$ is $k$. This means that the given valuation is not divisorial and then $V_q$ is not excellent (see \cite[Theorem 4.1]{D-S}). 
	
	Observe that $V_q$ contains a coefficient field, $k$. We will check next that $V_q$ cannot have a $p$-basis over $k$. Suppose, on the contrary, that $V_q$ has  a $p$-basis over $k$. Then, $\Omega_{V_q/k}$ has a differential basis.    But $$
	\Omega_{V_q/\FF_p}\simeq \Omega_{V_q/k}
	$$
	and therefore,    $\Omega_{V_q/\FF_p}$  also has  a differential basis. By   \cite[Theorem 1]{Tyc} it follows that  $V_q$ has an absolute  $p$-basis, but then     $V_q$ would be excellent (see \cite{KimuraNiitsumaTRU}). 
\end{ej}

\end{document}